\newtheorem{theo}{Theorem}[section]
\newtheorem{pro}[theo]{Proposition}
\newtheorem{coro}[theo]{Corollary}
\newtheorem{lem}[theo]{Lemma}
\theoremstyle{definition}
\newtheorem{defi}[theo]{Definition}
\newtheorem{exam}[theo]{Example}
\newtheorem{rem}[theo]{Remark}
\begin{document}

\title{On Deligne's functorial Riemann-Roch theorem in positive characteristic}
\author{\\Quan XU\\Universit\'{e} Paul Sabatier}\maketitle
\footnote{Email to: xuquan.math@gmail.com}




\begin{abstract}
In this note, we give a proof for a variant of the functorial Deligne-Riemann-Roch theorem in positive characteristic based on 
ideas appearing in Pink and R\"ossler's proof of the Adams-Riemann-Roch theorem in positive characteristic (see \cite{Pi}).
The method of their proof appearing in \cite{Pi}, which is valid for any positive characteristic and which is completely different from the 
classical proof, will allow us to prove the functorial Deligne-Riemann-Roch theorem in a much easier and more direct way. 
Our proof is also partially compatible with Mumford's isomorphism. 
\end{abstract}

\section{Introduction}

In \cite{Pi}, Richard Pink and Damian R\"{o}ssler proved the following version of the Adams-Riemann-Roch theorem:

Let $f:X\rightarrow Y$ be projective and smooth of relative dimension $r$, where
$Y$ is a quasi-compact scheme of characteristic $p>0$ and carries an ample invertible sheaf. Then the following equality
$$\psi^{p}(R^{\bullet}f_{*}(E))=R^{\bullet}f_{*}(\theta ^{p}(\Omega_{f})^{-1}\otimes\psi^{p}(E))~~~~~(*)$$
holds in $K_{0}(Y)[\frac{1}{p}]:=K_{0}(Y)\otimes_{\mathbb{Z}}\mathbb{Z}[\frac{1}{p}].$

The symbols in the previous equality are explained simply as follows:

(a) The symbol $\psi^{p}$ is the $p$-th Adams operation and $\theta ^{p}$ is the $p$-th Bott class operation (see Sect. 3.2 and 3.3);

(b) For a vector bundle $E$, $R^{\bullet}f_{*}(E)=\sum_{i\geq 0}(-1)^{i}R^{i}f_{*}(E)$ where $R^{i}f_{*}$ is the higher direct image functor of the push-forward $f_{*}$ (see Sect. 2.1);

(c) For a quasi-compact scheme $Y$, $K_{0}(Y)$ is the Grothendieck group of locally free coherent sheaves of $\mathcal{O}_{Y}$-modules (see Sect. 2.1);

(d) The symbol $\Omega_{f}$ is the relative differentials of the morphism $f$. When $f$ is a smooth and projective morphism, $\Omega_{f}$ is locally free sheaf (see \cite{Hart}).

For the general case, i.e., for a projective local complete intersection morphism $f$ (see \cite{FL}, Pag. 86) with no restrictions on the characteristic, $p$ can be replaced by any positive integer $k\geq2$ in the equality (*) 
and $\Omega_{f}$ is replaced by cotangent complex of the morphism $f$ (see \cite{Lit}). Then the previous equality  holds in 
$K_{0}(Y)\otimes_{\mathbb{Z}}\mathbb{Z}[\frac{1}{k}]$. The classical proof of the Adams-Riemann-Roch theorem for a projective local complete intersection morphism
consists in verifying that the theorem holds for both closed immersions and projections and also for their composition. Moreover, 
a deformation to a normal cone is used (see \cite{Man} or \cite{FL}). However, in the case of characteristic $p$, the decomposition 
of the projective morphism and the deformation are completely avoidable in the course of the proof of the $p$-th Adams Riemann Roch theorem, i.e., the equality $(*)$, when $f$ is a projective and smooth morphism. In this case, Pink and R\"ossler construct an explicit representative 
for the $p$-th Bott element $\theta ^{p}(E)=\tau(E):=\text{Sym}(E)/\mathcal{J}_{E}$ in $K_{0}(Z)$ for any locally free coherent sheaf $E$ over a quasi-compact scheme $Z$ of characteristic
$p$ where $\text{Sym}(E)$ is the symmetric algebra of $E$ and $\mathcal{J}_{E}$ is the graded sheaf of ideals of $\text{Sym}(E)$ that is locally generated
by the sections $e^{p}$ of $\text{Sym}(E)$ for all the sections $e$ of $E$ (see Sect. 3.2). 
Furthermore, they proved an isomorphism of $\mathcal{O}_{X}$-modules
$$I/ I^{2}\cong \Omega_{f},$$
and an isomorphism of graded $\mathcal{O}_{X}$-algebras:
$$\tau(I/ I^{2})\cong Gr(F^{*}F_{*}\mathcal{O}_{X}).$$ 
(The previous notations will be explained in Section \ref{section ARR char p}). These isomorphisms play an essential role in their proof
of the Adams-Riemann-Roch theorem in positive characteristic, which is also very important in our theorem.

When $f:C\rightarrow S$ is a smooth family of proper curves, Deligne proved the following functorial Riemann Roch theorem (see \cite{Del}, Theorem 9.9):
There exists a unique, up to sign, functorial isomorphism of line bundles
\begin{align}
(\det &Rf_{*}L)^{\otimes 18 }\notag\\
&\cong (\det  Rf_{*}\mathcal{O})^{\otimes 18 }\otimes (\det  Rf_{*}(L^{\otimes 2}\otimes 
\omega^{-1}))^{\otimes 6}\otimes  (\det  Rf_{*}(L\otimes \omega^{-1}))^{\otimes (-6)}.\notag
\end{align}
The statement above is not Deligne's original statement, but the essence is the same. We will explain it in Thm. \ref{Del,func} and Rem. \ref{explai}.

In this note, our strategy is to give a similar isomorphism in positive characteristic for a line bundle. More precisely, we will provide an isomorphism between $(\det Rf_{*}L)^{\otimes p^{4}}$ and some tensor products of $(\det Rf_{*}(L^{\otimes l}\otimes \omega ^{\otimes n}))^{\otimes m}$ for any line bundle $L$ and some integers $l, n, m$, in any characteristic $p>0$, by using only 
a property of the Deligne pairing in \cite{Del} as well as some ideas from the proof of the Adams-Riemann-Roch theorem in positive characteristic 
given in \cite{Pi}. More importantly, the isomorphism is also stable under base change. These are our main results (see Theorem \ref{functor}). 
A byproduct of our result is a partial compatibility with Mumford's isomorphism. We will give a brief introduction to  Mumford's isomorphism and verify the 
compatibility (see Cor. \ref{mfi}).

Furthermore, we also make a comparison between Deligne's functorial Riemann Roch theorem and our result. 
In the case of characteristic $p=2$, our result completely coincides with Deligne's theorem. When the characteristic is an odd prime number, our theorem is just an  analogue of Deligne's.
The possible reason is that our setting emphasizes a lot about the case of characteristic $p$. However, Deligne's work in \cite{Del} is independent of the characteristic.

\section{Preliminaries}
We always assume that $X$ is a quasi-compact scheme whenever a scheme is mentioned in this note, unless we use a different statement for the scheme.
\subsection{Grothendieck groups and the virtual category}
Let $X$ be a scheme. We denote by $\mathcal{I}$ the category of coherent sheaves on $X$ and
 by $\mathcal{L}$  its fully subcategory of locally free
sheaves . Furthermore, let $\mathbb{Z}[\mathcal{I}]$ (respectively $\mathbb{Z}[\mathcal{L}]$) denote the free abelian group generated by the isomorphism classes $[\mathcal{F}]$ of $\mathcal{F}$ in category
$\mathcal{I}$(respectively $\mathcal{L}$).

For the exact sequence $0\rightarrow \mathcal{F}_{1}\rightarrow\mathcal{F}_{2}\rightarrow\mathcal{F}_{3}\rightarrow 0$
of sheaves of $\mathcal{I}$ (respectively $\mathcal{L}$), we form the element $\mathcal{F}_{2}-\mathcal{F}_{1}-\mathcal{F}_{3}$ and consider the subgroup $\mathcal{J}$ (respectively $\mathcal{J}_{1}$) generated by such elements 
in $\mathbb{Z}[\mathcal{I}]$ (resp. $\mathbb{Z}[\mathcal{L}]$).

\begin{defi}\label{b. 1}
  We define $K_{0}(X):=\mathbb{Z}[\mathcal{L}]/\mathcal{J}_{1}$ and $K_{0}^{'}(X):=\mathbb{Z}[\mathcal{I}]/\mathcal{J}$. \\ 
Usually, $K_{0}(X)$ and $K_{0}^{'}(X)$ are called the Grothendieck groups of locally free shaves and coherent sheaves on a scheme $X$, respectively.
\end{defi}
The following are basic facts about Grothendieck groups:

(1) The tensor product of $\mathcal{O}_{X}$-modules makes the group $K_{0}(X)$ into a commutative unitary ring and the inverse image of locally free sheaves under
any morphism of schemes $X^{'}\rightarrow X$ induces a morphism of unitary rings $K_{0}(X)\rightarrow K_{0}(X^{'})$ (see \cite{Man}, \S 1);

(2) The obvious group morphism $K_{0}(X)\rightarrow K_{0}^{'}(X)$ is an isomorphism if $X$ is regular and carries an ample invertible sheaf (see \cite{Man}, Th. I.9);

(3) Let $f:X\rightarrow Y$ be a projective local complete intersection morphism of schemes (A morphism $f 
: X\rightarrow Y$ is called a  local complete intersection morphism if $f $  is a composition of morphisms as $X\rightarrow \text{P} \rightarrow Y$ where the first morphism is a regular embedding and the second is a smooth morphism. See \cite{Ful}  or \cite{FL}) and $Y$ carries an ample invertible sheaf. There is
a unique group morphism $\text{R}^{\bullet}f_{*}:K_{0}(X)\rightarrow K_{0}(Y)$ which sends the class of a locally free coherent sheaf $E$ on $X$ to the class of the class of the strictly perfect complex (The strictly perfect complex will be defined in Sect. 2.2) $\text{R}^{\bullet}f_{*}E$ in $K_{0}(Y)$, where $\text{R}^{\bullet}f_{*}E$ is defined 
to be $\sum_{i\geq 0}(-1)^{i}\text{R}^{i}f_{*}E$ and $\text{R}^{i}f_{*}E$ is viewed as an element in $K_{0}(Y)$ (see \cite{Berth}, IV, 2.12).

In \cite{Del}, Deligne defined a categorical refinement of the Grothendieck groups. In order to define it, we
need to review some material from the theory of exact categories. Above all, let us recall definitions of the additive category and the abelian category.
\begin{defi}
An additive category is a category $\mathcal{A}$ in which $\text{Hom}(A,B)$ is an abelian group for all objects $A, B$, composition of arrows is bilinear, and  
$\mathcal{A}$ has (finite) direct sums and a zero object. An abelian category is an additive category in which every arrow $f$ has a kernel, co-kernel, image and co-image, and the canonical
map $\text{coim}(f)\rightarrow \text{im}(f)$ is an isomorphism.
\end{defi}
Let $\mathcal{A}$ be an additive category. A short sequence in $\mathcal{A}$ is a pair of
composable morphisms $L\rightarrow M\rightarrow N$ such that $L\rightarrow M $ is a kernel for $M\rightarrow N$
and $M\rightarrow N$ is a cokernel for $L \rightarrow M$. Homomorphisms of short sequences are defined in the obvious way as commutative diagrams.

\begin{defi}
 An exact category is an additive category $\mathcal{A}$ together with a choice $S$ of
a class of short sequences, called short exact sequences, closed under isomorphisms and satisfying the axioms below. A short exact sequence is displayed as
$L \rightarrowtail M \twoheadrightarrow N$, where $L \rightarrowtail M$ is called an admissible monomorphism and $M\twoheadrightarrow N$ is
called an admissible epimorphism. The axioms are the following:

(1) The identity morphism of the zero object is an admissible monomorphism
and an admissible epimorphism.

(2) The class of admissible monomorphisms is closed under composition and
cobase changes by push-out along arbitrary morphisms, i.e., given any admissible monomorphism $L\rightarrowtail M$ and any arbitrary $L\rightarrow L^{'}$ , their push-out $M^{'}$ exists and the induced morphism $L^{'} \rightarrow M^{'}$ is again an admissible
monomorphism.
$$\xymatrix{L\ar @{>->}[r]\ar[d]&M\ar@ {-->}[d]\\
L^{'}\ar@{>-->}[r]& M^{'}}$$

(3) Dually, the class of admissible epimorphisms is closed under composition
and base changes by pull-backs along arbitrary morphisms, i.e., given any
admissible epimorphism $M\twoheadrightarrow
N$ and any arbitrary $N^{'}\rightarrow N$ , their pullback
$M^{'}$ exists and the induced morphism $M^{'}\rightarrow N^{'}$ is again an admissible
epimorphism.
$$\xymatrix{M^{'}\ar @{-->}[r]\ar @{-->>}[d]&M\ar @{->>}[d]\\
N^{'}\ar[r]& N
}$$
\end{defi}
\begin{exam}
Any abelian category is an exact category in an evident way. Any additive  category can be made into an exact category in at least one way by taking $S$ be the family
of split exact sequences. 
\end{exam}
 
In order to give the definition of the virtual category, we shall need the definition of a groupoid, especially of a specific groupoid called the Picard groupoid (see \cite{Del}, \S 4).
\begin{defi}
A groupoid is a (small) category in which all morphisms are invertible.
\end{defi}
This means there is a set $B$ of objects, usually called the base, and a set $G$ of morphisms, usually called the arrows. One
says that $G$ is a groupoid over $B$ and writes $\xymatrix{G\ar@<+.7ex> [r]\ar@<-.7ex>[r]& B}$ or just $G$ when the base is understood.
We can be much more explicit about the structure of a groupoid. To begin with, each arrow has an associated source object and associated
target object. This means that there are two maps $$s, t: G\rightarrow B$$ called the source and the target, respectively. Since a groupoid
is a category, there is a multiplication of arrows $$m: G\times_{B} G\rightarrow G$$ where  $G\times_{B} G$ fits into the pull-back
square:  $$\xymatrix{G\times_{B} G\ar[d]\ar[r]& G\ar[d]^{s}\\
                     G\ar[r]^{t}&B      }$$

More explicitly,  $$G\times_{B} G=\{(h,g)\in G\times G\lvert s(h)=t(g)\}=(s\times t)^{-1}(\Delta_{B}).$$
This is just to say that we can only compose arrows when the target of the first and source of the second agree. This multiplication preserves
sources and targets: $$ s(hg)=s(g), t(hg)=t(h),$$ and is associative:
                                       $$k(hg)=(kh)g.$$

For each object $x\in B$, there is an identity arrow, written $1_{x}\in G$ and this association defines an injection $$ \mathbf{1}: B\hookrightarrow G.$$\\
For each arrow $g\in G$, there is an inverse arrow, written $g^{-1}\in G$, and this defines a bijection $$\iota : G\rightarrow G.$$\\
These identities and inverse satisfy the usual properties. Namely, identities work as expected: $$1_{t(g)}g=g=g1_{s(g)},$$
and  inversion swaps sources and targets: $$s(g^{-1})=t(g),~t(g^{-1})=s(g),$$
and inverses works as expected, with respect to the identities: $$g^{-1}g=1_{s(g)},~ gg^{-1}=1_{t(g)}.$$
Thus we have a set of maps between $B$ and $G$ as follows:$$\xymatrix{ B\ar @{.>}[r]&G\circlearrowleft\iota\ar@<+.9ex> [l]^{s}\ar@<-.9ex>[l]_{t}}$$

\begin{exam}Any set $X$ can be viewed as a groupoid over itself, where the only arrows are identities. This is the trivial groupoid, or the unit groupoid
and is simply written as $X$. The source and target maps are the identity map $\text{id}_{X}$, multiplication is only defined between a point and itself: $$xx=x.$$
\end{exam}

 \begin{exam} Any set gives rise to the pair groupoid of $X$. The base is $X$, and the set of arrows is $X\times X \rightrightarrows X.$ The source and target
maps are the first and second projection maps. Multiplication is defined as follows: $(x,x^{'})(x^{'},x{''})=(x,x{''}).$
\end{exam}

\begin{defi}

A Picard category is a groupoid $P$ together with the following extra structure:\\
1. A functor $+ : P\times P\rightarrow P.$\\
2. An isomorphism of functors:  $\xymatrix{&P\times P \times P\ar[dl]_{+\times Id}\ar[dr]^{Id\times +}&\\
                                   P\times P\ar[dr]^{+}&  &P\times P\ar[dl]_{+}\\
                                    & P  &    }$

$ \sigma_{x,y,z}: (x+y)+z\backsimeq x+(y+z).$\\
3. A natural transformation $\tau_{x,y}: x+y\backsimeq y+x$ commuting with $+$.\\
4. For all $x\in P$, the functor $P\rightarrow P$ by $y\mapsto x+y$ is an equivalence.\\
5. Pentagon Axiom: The following diagram commutes\\
$$\xymatrix{  & (x+y)+(z+w)\ar[ddl]^{\sigma_{x,y,z+w}}& \\
                  & & \\
                    x+(y+(z+w)) & &  ((x+y)+z)+w\ar[uul]_{\sigma_{x+y,z,w}}\ar[dd]_{\sigma_{x,y,z}}\\
                      & &\\
        x+((y+z)+w)\ar[uu]_{\sigma_{y,z,w}}& &(x+(y+z))+w.\ar[ll]^{\sigma_{x,y+z,w}}                   }$$
6. $\tau_{x,x}$=id for all $x\in P$\\
7. $\forall x,y\in P, \tau_{x,y}\tau_{y,x}$=id\\
8. Hexagon Axion: The following diagram commutes:\\
$$\xymatrix{    & x+(y+z)\ar[r]^{\tau}& x+(z+y)\\
      (x+y)+z\ar[ur]^{\sigma}& & & (x+z)+y\ar[ul]_{\sigma}\\
    & z+(x+y)\ar[ul]^{\tau}\ar[r]^{\sigma}& (z+x)+y\ar[ur]^{\tau}&                    }.$$

\end{defi}

\begin{exam}\label{lbd}
Let $X$ be a scheme. We denote by $\mathscr{P}_{X}$ the category of graded invertible 
$\mathcal{O}_{X}$-modules. An object of $\mathscr{P}_{X}$ is a pair $(L,\alpha)$ where $L$ is an invertible $\mathcal{O}_{X}$-module and $\alpha$
is a continuous function: $$\alpha: X\rightarrow \mathbb{Z}.$$ 

A homomorphism $h:(L,\alpha)\rightarrow (M,\beta)$ is a homomorphism of $\mathcal{O}_{X}$-modules such that for each $x\in X$ we have:
$$\alpha(x)\neq\beta(x)\Rightarrow h_{x}=0.$$

We denote by $\mathscr{P}is_{X}$ the subcategory of $\mathscr{P}_{X}$ whose morphisms are all isomorphism.
The tensor product of two objects in $\mathscr{P}_{X}$ is given by: $$(L,\alpha)\otimes(M,\beta)=(L\otimes M, \alpha+\beta).$$
For each pair of objects $(L,\alpha), (M,\beta)$ in $\mathscr{P}_{X}$  we have an isomorphism:
$$\xymatrix{\psi_{(L,\alpha), (M,\beta)}: (L,\alpha)\otimes(M,\beta)\ar[r]^(.60){\sim}&(M,\beta)\otimes(L,\alpha)  }$$
defined as follows: If $l\in L_{x}$ and $m\in M_{x}$ then 
$$\psi(l\otimes m)=(-1)^{\alpha(x)+\beta(y)}\cdot m\otimes l.$$\\
Clearly: $$\psi_{(M,\beta),(L,\alpha)}\cdot\psi_{(L,\alpha), (M,\beta)}=1_{(L,\alpha)\otimes(M,\beta)}$$\\
We denote by $1$ the object $(\mathcal{O}_{X},0)$. A right inverse of an object $(L,\alpha)$ in $\mathscr{P}_{X}$ will be an object $(L^{'},\alpha^{'})$
together with an isomorphism $$\xymatrix{\delta: (L,\alpha)\otimes(L^{'},\alpha^{'})\ar[r]^(.70){\sim}& 1}$$\\
Of course $\alpha^{'}=-\alpha$.
A right inverse will be considered as a left inverse via:
$$\xymatrix{\delta: (L^{'},\alpha^{'})\otimes(L,\alpha)\ar[r]_(.55){\sim}^(.55){\psi}& (L,\alpha)\otimes(L^{'},\alpha^{'})\ar[r]_(.70){\sim}^(.70){\delta}&1 }.$$
Given the definition, further verification implies that $\mathscr{P}is_{X}$ is a Picard category.
\end{exam}
After defining the exact category, we can give the definition of Deligne's virtual category. 
By an admissible filtration in an exact category we mean a finite sequence of admissible monomorphisms $0=A^{0}\rightarrowtail A^{1}\rightarrowtail
\cdots \rightarrowtail A^{n} = C.$
\begin{defi}\label{vircon}(see \cite{Del}, Pag. 115)
The virtual category $V(\mathcal{C})$ of an exact category $\mathcal{C}$ is
a Picard category, together with a functor $\{~\} : (\mathcal{C}, iso)\rightarrow V(\mathcal{C})$ (Here, the
first category is the subcategory of $\mathcal{C}$ consisting of the same objects and the
morphisms are the isomorphisms of $\mathcal{C}$.), with the following universal property:\\
Suppose we have a functor $[~] : (\mathcal{C}, iso)\rightarrow P$ where $P$ is a Picard category,
satisfying

(a) Additivity on exact sequences, i.e., for an exact sequence $A \rightarrow B \rightarrow C $ ($A\rightarrow B$ is a admissible monomorphism and $B\rightarrow C$ is a admissible epimorphism), 
we have an isomorphism $[B]\cong[A]+[C]$, functorial with respect to isomorphisms of exact sequences.

(b) A zero-object of $C$ is isomorphically mapped to a zero-object in $P$ (According to (4) in the definition of  the Picard category, it implies the existence of the 
unit object which is also called zero-object. See \cite{Del}, \S 4.1.).

(c) The additivity on exact sequences is compatible with admissible filtrations, i.e., for an admissible filtration $C\supset B\supset A\supset 0$,
the diagram of isomorphisms from (a) 
$$\xymatrix{[C]\ar[rr]\ar[d]&    & [A]+ [C/A]\ar[d]\\
    [B]+[C/B]\ar[rr] & & [A]+[B/A]+[C/B]                  }$$
 is commutative.

(d) If $f: A\rightarrow B$ is an isomorphism and $\sum$ is the exact sequence $0\rightarrow A\rightarrow B$ (resp. $A\rightarrow B\rightarrow 0$ ), then $[f]$ (resp. $[f]^{-1}$) 
is the composition
$$\xymatrix{[A]\ar[r]_(.35){\sum}&[0]+[B]\ar[r]_(.6){(b)}&[B]   }$$
$$(resp. \xymatrix{[B]\ar[r]_(.35){\sum}&[A]+[0]\ar[r]_(.6){(b)}&[A]})$$
where (b) in the diagram above means that the morphism is from (b).\\
Then the conclusion is that the functor $[~] :(\mathcal{C}, iso) \rightarrow P$ factors uniquely up to
unique isomorphism through $(\mathcal{C}, iso)\rightarrow V(\mathcal{C})$. 
\end{defi}
Roughly speaking, for an exact category $\mathcal{C}$, $V(\mathcal{C})$ is a universal Picard category with a functor $[~]$ satisfying some properties. In practice, the functor $[~]$ usually can be chosen as the determinant functor we will 
define in the next subsection.
\begin{rem}\label{v,k}
 In \cite{Del}, Deligne also provided a topological definition for the virtual category  of a small exact category.
The category of virtual objects of $\mathcal{C}$, $V(\mathcal{C})$ is the following: Objects
are loops in $\mathcal{B}Q\mathcal{C}$ around a fixed zero-point, and morphisms are homotopy-classes of homotopies
of loops. Here $\mathcal{B}Q\mathcal{C}$, is the geometrical realization of the Quillen Q-construction of $\mathcal{C}$.
The addition is the usual addition of loops. This construction is the fundamental groupoid of the loop
space $\Omega\mathcal{B}Q\mathcal{C}$ of $\mathcal{B}Q\mathcal{C}$. By the description above and Quillen's definition of $K$-theory (see \cite{Quill}), the group of isomorphism classes of objects
of the virtual category is the usual Grothendieck group $K_{0}(X)$ of the category of vector bundles on $X$ (see \cite{Del}, Pag. 114).
\end{rem}

\subsection{The determinant functor}\label{sdf}

In this subsection, we will consider the determinant functor and mainly consult \cite{Kund}. In \cite{Kund}, the determinant functor can be defined in several backgrounds.
But the case we are most interested in  is the determinant functor from some subcategory of derived category to the subcategory of the category $\mathscr{P}_{X}$ of graded line bundles. 

In the following,  we denote by $\mathscr{C}_{X}$ the category  of finite locally free $\mathcal{O}_{X}$-modules for a scheme $X$.
 
\begin{defi}
If $E\in \text{ob}(\mathscr{C}_{X})$, we define: $\det ^{*}(F)=(\wedge^{max}F, \text{rank}F)$ \\ (where $(\wedge^{max}F)_{x}=\wedge^{\text{rank}F_{x}}F_{x}$).
\end{defi}

For every short exact sequence of objects in $\mathscr{C}_{X}$ 
$$\xymatrix{0\ar[r]&F^{'}\ar[r]^{\alpha}& F\ar[r]^{\beta}&F^{''}\ar[r]&0                 }$$ we have an isomorphism,
$$\xymatrix{i^{*}(\alpha,\beta): \det^{*}F^{'}\otimes \det^{*}F^{''}\ar[r]^(.70){\sim}&\det^{*} F   },$$ such that locally,
$$i^{*}(\alpha,\beta)((e_{1}\wedge\ldots\wedge e_{l})\otimes(\beta f_{1}\wedge\ldots\beta f_{s}))=\alpha e_{1}\wedge\ldots\alpha e_{l}\wedge f_{1}\wedge\ldots f_{s}$$
for $e_{i}\in\Gamma (U,F^{'})$ and $f_{j}\in\Gamma (U,F^{''})$.

\begin{defi}
 If $F^{i}$ is an indexed object of $\mathscr{C}_{X}$ we define:
\[
\det(F^{i})=
\begin{cases}
\det^{*}(F^{i}) &\text{if $i$ even};\\
 \det^{*}(F^{i})^{-1}    &\text{if $i$ odd}.

\end{cases}
\]
If $$\xymatrix{ 0\ar[r]& F^{i^{'}}\ar[r]^{\alpha^{i}}&F^{i}\ar[r]^{\beta^{i}}&F^{i^{''}}\ar[r]&0} $$
is an indexed short exact sequence of objects in $\mathscr{C}_{X}$, we define
\[
i(\alpha^{i},\beta^{i})=
\begin{cases}
i^{*}(\alpha^{i},\beta^{i})&\text{if $i$ even};\\
 i^{*}(\alpha^{i},\beta^{i})^{-1}   &\text{if $i$ odd}.

\end{cases}
\]
Usually, for a object $F$ in $\mathscr{C}_{X}$, we view the object as the indexed object by $0$, i.e., $\det(F)=\det^{*}(F)$.

\end{defi}
We also denote by $\mathscr{C}^{\cdot}_{X}$ the category of the bounded complex of objects in $\mathscr{C}_{X}$ over a scheme $X$.
\begin{defi}
If $F^{\cdot}$ is an object of $\mathscr{C}^{\cdot}_{X}$, we define $$\det(F^{\cdot})=\cdots\otimes\det(F^{i+1})\otimes\det(F^{i})\otimes\det(F^{i-1})\otimes\cdots$$
Furthermore, if $$\xymatrix{ 0\ar[r]& F^{\cdot'}\ar[r]^{\alpha}&F^{\cdot}\ar[r]^{\beta}&F^{\cdot''}\ar[r]&0} $$
is a short exact sequence of objects in $\mathscr{C}^{\cdot}_{X}$ we define 
$$\xymatrix{i(\alpha,\beta):\det(F^{\cdot'})\otimes \det(F^{\cdot''})\ar[r]^(.70){\sim}& \det(F^{\cdot})  }$$
to be the composite:
$$\det(F^{\cdot'})\otimes \det(F^{\cdot''})=\cdots\otimes\det(F^{i'})\otimes\det(F^{i-1'})\otimes\cdots$$
$$\xymatrix{\otimes\det(F^{i''})\otimes\det(F^{i-1''})\otimes\cdots\ar[r]^(.55){\sim}&\cdots\otimes\det(F^{i'})\otimes\det(F^{i''}) }$$
$$\xymatrix @C=0.7in{\otimes\det(F^{i-1'})\otimes\det(F^{i-1''})\otimes\cdots\ar[r]^(.65){\otimes_{i}i(\alpha^{i},\beta^{i})}_(.65){\sim}&\cdots\otimes\det(F^{i})  }$$
$$\otimes\det(F^{i-1})\otimes\cdots=\det(F^{\cdot}) .$$
\end{defi}
In \cite{Kund}, it is proved that there is one and, up to canonical isomorphism, only one determinant $(f,i)$ from $\mathscr{C}is_{X}$ (resp. $\mathscr{C}^{\cdot}is_{X}$) to $\mathscr{P}is_{X}$, which we write $(\det, i)$, 
where $\mathscr{C}is_{X}$ (resp. $\mathscr{C}^{\cdot}is_{X}$) is the category with same objects from $\mathscr{C}_{X}$ (resp. $\mathscr{C}^{\cdot}_{X}$) and the morphisms being
all isomorphisms (resp. quasi-isomorphisms). In case of repeating, we don't give the definitions of the determinant functor from from $\mathscr{C}is_{X}$ (resp. $\mathscr{C}^{\cdot}is_{X}$) to $\mathscr{P}is_{X}$, because the definitions are completely similar
to the following definition of the extended functor. For the precise definitions and proofs, see \cite{Kund}, Pag. 21-30.

In order to extend the determinant functor to the derived category in \cite{Kund}, we need to recall the definitions about the perfect complex and the strictly perfect complex:

In [11], a perfect complex $\mathcal{F}^{\cdot}$ on a scheme $X$ means a complex of $\mathcal{O}_{X}-$modules (not necessarily quasi-coherent) such
that locally on $X$ there exists a bounded complex $\mathcal{G}^{\cdot}$ of finite free $\mathcal{O}_{X}-$modules and a quasi-isomorphism:
$$\mathcal{G}^{\cdot}\rightarrow\mathcal{F}^{\cdot}\mid_{U}$$ for any open subset $U$ of a covering of $X$. A strictly perfect complex $\mathcal{F}^{\cdot}$ on a scheme $X$
is a bounded complex of locally free $\mathcal{O}_{X}-$modules of finite type. In other words, a perfect complex is locally quasi-isomorphic to a strictly perfect complex.
We denote by $\text{Parf}_{X}$ the full subcategory of $\text{D(Mod}X)$ whose objects are perfect complexes and denote by $\text{Parf-is}_{X}$ the subcategory of
$\text{D(Mod}X)$  whose objects are perfect complexes and morphisms are only quasi-isomorphisms.
\begin{defi}\label{ex,de}(see \cite{Kund}, Pag. 40)
An extended determinant functor $(f, i)$ from  $\text{Parf-is}$ to $\mathscr{P}is$ consist of the following data:

I) For every scheme $X$, a functor
$$f_{X}: \text{Parf-is}_{X}\rightarrow\mathscr{P}is_{X}$$ such that $f_{X}(0)=1$.

II) For every short exact sequence of complexes $$ \xymatrix{ 0\ar[r]& F\ar[r]^{\alpha}&G\ar[r]^{\beta}&H\ar[r]&0} $$ in $\text{Parf-is}_{X}$,
 we have an isomorphism:
$$i_{X}(\alpha,\beta):\xymatrix{f_{X}(F)\otimes f_{X}(H)\ar[r]^(.65){\sim}&f_{X}(G)}$$ such that for the particular short exact sequences
$$\xymatrix {0\ar[r]&H\ar@{=}[r]&H\ar[r]&0\ar[r]&0}$$
and 
$$\xymatrix {0\ar[r]&0\ar[r]&H\ar@{=}[r]&H\ar[r]&0}$$
we have : $i_{X}(1,0)=i_{X}(0,1)=1_{f_{X}(H)}$.\\
We require that:

i) Given an isomorphism of short exact sequences of complexes
$$\xymatrix{0\ar[r]&F\ar[r]^{\alpha}\ar[d]^{u}&G\ar[r]^{\beta}\ar[d]^{v}&H\ar[r]\ar[d]^{w}&0 \\
0\ar[r]&F^{'}\ar[r]^{\alpha ^{'}}&G^{'}\ar[r]^{\beta ^{'}}&H^{'}\ar[r]&0  }$$
the diagram 
$$\xymatrix{f_{X}(F)\otimes f_{X}(H)\ar[r]^(.65){i_{X}(\alpha,\beta)}_(.65){\sim}\ar[d]^{f(u)\otimes f_{X}(w)}_{\wr}&f(G)\ar[d]^{f_{X}(v)}_{\wr}\\
f(F^{'})\otimes f(H^{'})\ar[r]_(.65){i_{X}(\alpha^{'},\beta^{'})}^(.65){\sim} &  f(G^{'})      }$$
commutes.

ii) Given a exact sequence of short exact sequences of complexes, i.e., a commutative diagram
$$\xymatrix{    &   0\ar[d]&0\ar[d]&0  \ar[d]\\
0\ar[r]&F\ar[r]^{\alpha}\ar[d]^{u}&G \ar[r]^{\beta}\ar[d]^{u^{'}}&H\ar[r]\ar[d]^{u^{''}}&0 \\
0\ar[r]&F^{'}\ar[r]^{\alpha^{'}}\ar[d]^{v}&G^{'}\ar[r]^{\beta^{'}}\ar[d]^{v^{'}}&H^{'}\ar[r]\ar[d]^{v ^{''}}&0 \\
0\ar[r]&F^{''}\ar[r]^{\alpha^{''}}\ar[d]&G^{''} \ar[r]^{\beta^{''}}\ar[d]&H^{''}\ar[r]\ar[d]&0 \\
            &   0  &0  &0       }$$
the diagram:
$$\xymatrix{f_{X}(F)\otimes f_{X}(H)\otimes f_{X}(F^{\cdot''})\otimes f_{X}(H^{''})\ar[rrr]^(.62){i_{X}(\alpha,\beta)\otimes i_{X}(\alpha^{''},\beta^{''})}_(.62){\sim}\ar[d]^{i_{X}(u,v)\otimes i_{X}(u^{''},v^{''})}_{\wr}&  & &f_{X}(F^{\cdot})\otimes f_{X}(H^{\cdot})\ar[d]^{i_{X}(u^{'}, v^{'})}_{\wr}\\
f_{X}(F^{'})\otimes f_{X}(H^{'})\ar[rrr]_(.65){i_{X}(\alpha^{'},\beta^{'})}^(.65){\sim} & &  &f_{X}(G^{'})      }$$
commutes.

iii) $f$ and $i$ commutes with base change. More precisely, this means: \\
For every morphism of schemes 
$$g: X\rightarrow Y$$
we have an isomorphism 
$$\eta(g):  \xymatrix{f_{X}\cdot \text{L}g^{*}\ar[r]^{\sim}&g^{*}f_{X}}$$ such that for every short exact sequence of complexes
$$\xymatrix{0\ar[r]&F^{\cdot}\ar[r]^{u}&G^{\cdot}\ar[r]^{v}&H^{\cdot}\ar[r]&0}$$
the diagram:
$$\xymatrix{f_{X}(\text{L}g^{*}F^{\cdot})\otimes f_{X}(\text{L}g^{*}H^{\cdot})\ar[d]^{\eta \cdot\eta}_{\wr}\ar[rr]^(.60){i_{Y}(\text{L}g^{*}(u,v))}_(.60){\sim}&& f_{X}(\text{L}g^{*}G^{\cdot})\ar[d]^{\eta}_{\wr}\\
g^{*}f_{Y}(F^{\cdot})\otimes g^{*}f_{Y}(H^{\cdot})\ar[rr]^{i_{Y}(u,v)}_{\sim}& &g^{*}f_{Y}(F^{\cdot})
}$$
commutes, where $\text{L}g^{*}$ is the left derived functor of the morphism $g$ and exists for the category whose objects are short exact sequences of 
complexes of three objects in $\text{Mod}(Y)$ and whose morphisms are triples
such that the resulting diagram (like the diagram in i) but not isomorphism in general) commutes (see \cite{Kund}, Prop. 3). Moreover if 
$$\xymatrix{X\ar[r]^{g}&Y\ar[r]^{h}&Z}$$ are two consecutive morphisms, the diagram:

$$\xymatrix{ f_{X}(\text{L}g^{*}\text{L}h^{*})\ar[r]^{\eta(g)}_{\sim}\ar[d]^{f_{X}(\theta)}_{\wr}& g^{*}f_{Y}\text{L}h^{*}\ar[r]^{g^{*}\eta(h)}_{\sim}&g^{*}h^{*}f_{Z}\ar[d]_{\wr}\\
          f_{X}(\text{L}(h\cdot g)^{*})\ar[rr]^{\sim}&  &(h\cdot g)^{*}f_{Z}
}$$
commutes where $\theta$ is the canonical isomorphism 
$$\theta:\xymatrix{ \text{L}g^{*}\text{L}h^{*}\ar[r]^{\sim}&\text{L}(h\cdot g)^{*}},$$
iv) On finite complexes of locally free $\mathcal{O}_{X}$-modules
$$f=\det~\text{and}~~~i=i.$$
\end{defi}
\begin{theo}\label{ucd}
 There is one, and, up to canonical isomorphism, only one extended determinant functor $(f,\text{i})$ which we will write $(\det,\text{i})$.
\end{theo}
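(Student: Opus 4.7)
The plan is to establish existence by descending from the determinant functor already constructed on $\mathscr{C}^{\cdot}\mathrm{is}$ (bounded complexes of finite locally free sheaves with quasi-isomorphisms as morphisms) via a local-to-global gluing argument, and to derive uniqueness by combining axiom iv), which rigidifies the functor on strictly perfect complexes, with the base-change axiom iii), which propagates the identification to arbitrary perfect complexes.

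For existence, the key observation is that by definition of a perfect complex, any $\mathcal{F}^{\cdot}\in \mathrm{Parf}_{X}$ admits an open cover $\{U_{i}\}$ of $X$ together with strictly perfect complexes $\mathcal{G}_{i}^{\cdot}$ and quasi-isomorphisms $\varphi_{i}:\mathcal{G}_{i}^{\cdot}\xrightarrow{\sim}\mathcal{F}^{\cdot}|_{U_{i}}$. On each $U_{i}$ I would set $f_{X}(\mathcal{F}^{\cdot})|_{U_{i}}:=\det(\mathcal{G}_{i}^{\cdot})$, a graded line bundle in the sense of Example \ref{lbd}. On an overlap $U_{i}\cap U_{j}$, both $\mathcal{G}_{i}^{\cdot}|_{U_{i}\cap U_{j}}$ and $\mathcal{G}_{j}^{\cdot}|_{U_{i}\cap U_{j}}$ are quasi-isomorphic to $\mathcal{F}^{\cdot}|_{U_{i}\cap U_{j}}$; after passing, if necessary, to a common third strictly perfect complex mapping quasi-isomorphically to both, the already-established determinant on $\mathscr{C}^{\cdot}\mathrm{is}$ yields canonical transition isomorphisms $\det(\mathcal{G}_{i}^{\cdot})|_{U_{i}\cap U_{j}}\xrightarrow{\sim}\det(\mathcal{G}_{j}^{\cdot})|_{U_{i}\cap U_{j}}$. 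The cocycle identity on triple overlaps is exactly the coherence of the determinant under composition of quasi-isomorphisms, which is encoded in axiom ii) at the strictly perfect level. Descent of graded line bundles along $\{U_{i}\}$ then yields a well-defined graded line bundle $f_{X}(\mathcal{F}^{\cdot})$ on $X$.

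The isomorphism $\mathrm{i}_{X}(\alpha,\beta)$ for a short exact sequence of perfect complexes is obtained by refining the cover so that all three terms admit compatible strictly perfect resolutions fitting into a short exact sequence of complexes, and then applying the known $\mathrm{i}$ at the strictly perfect level. Axioms i) and ii) of Definition \ref{ex,de} are then local and reduce to their strictly perfect analogues; axiom iv) is built into the construction since a globally strictly perfect complex can serve as its own local model; and axiom iii), the compatibility with $\mathrm{L}g^{*}$, follows from the fact that the pullback of a strictly perfect complex remains strictly perfect, together with the canonical isomorphism $\mathrm{L}g^{*}\mathrm{L}h^{*}\xrightarrow{\sim}\mathrm{L}(h\circ g)^{*}$.

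The main obstacle will be showing that the local construction is genuinely independent of the auxiliary choices and glues coherently: two local strictly perfect resolutions of the same perfect complex need not admit a direct quasi-isomorphism between them, so one must invoke a common third resolution and verify that the resulting transition isomorphism is independent of that choice, which is where the coherence of the strictly perfect determinant is crucial. For uniqueness, given a second extended determinant $(f',\mathrm{i}')$, axiom iv) identifies $f'$ with $\det$ on strictly perfect complexes; for a general $\mathcal{F}^{\cdot}$, axiom iii) applied to the open inclusions $U_{i}\hookrightarrow X$ together with the local quasi-isomorphisms $\varphi_{i}$ identifies $f'_{X}(\mathcal{F}^{\cdot})|_{U_{i}}$ canonically with $\det(\mathcal{G}_{i}^{\cdot})=f_{X}(\mathcal{F}^{\cdot})|_{U_{i}}$, and axioms i) and ii) then guarantee that these local identifications agree on overlaps and hence glue to a unique canonical isomorphism $f'\xrightarrow{\sim}f$.
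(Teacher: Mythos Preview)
The paper does not supply its own proof of this statement: it simply cites \cite{Kund}, Theorem~2. Your sketch is therefore not being compared against an argument in the paper but against the Knudsen--Mumford construction itself, and in broad outline it does follow their strategy of local strictly perfect resolutions plus gluing.

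Two small points are worth flagging. First, the cocycle condition on triple overlaps is not a consequence of axiom~ii) (which concerns $3\times 3$ diagrams of short exact sequences) but of the \emph{functoriality} of $\det$ on $\mathscr{C}^{\cdot}\mathrm{is}$ under composition of quasi-isomorphisms; you should attribute it there. Second, the phrase ``a common third strictly perfect complex mapping quasi-isomorphically to both'' hides the genuine technical content of \cite{Kund}: two strictly perfect resolutions of the same perfect complex are connected in the derived category, but realizing this comparison by an honest roof of strictly perfect complexes with actual chain maps, locally on the base, requires work (Knudsen--Mumford handle this via their notion of ``good'' complexes and a careful analysis of when quasi-isomorphisms can be represented by genuine maps of bounded-above complexes of free modules). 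Your sketch is correct in spirit, but this step is where the real effort lies, and it should be named as such rather than absorbed into a parenthetical.
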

\begin{proof}
See \cite{Kund}, Theorem 2, Pag. 42.
\end{proof}

The theorem above implies that the functor $(\det, i)$ have same compatibility as ordinary $\det^{*}$. In particular:

a) If each term $\mathcal{F}^{n}$ in the corresponding perfect complex $\mathcal{F}^{\cdot}$ is itself perfect, i.e., has locally a finite free resolution, then $$\det(\mathcal{F}^{\cdot})\cong \otimes_{n}\text{det}^{*}(\mathcal{F}^{n})^{(-1)^{n}}.$$

b) If the cohomology sheaves $H^{n}(\mathcal{F}^{\cdot})$ of the complex are perfect we denote the objects of subcategory by $\text{Parf}^{0}\subset\text{Parf}$,
then $$\det(\mathcal{F}^{\cdot})\cong \otimes_{n}\text{det}^{*}(H^{n}(\mathcal{F}^{\cdot}))^{(-1)^{n}}.$$
From the previous theorem and a) and b), we have the following corollary:
\begin{coro}
 Let $$\xymatrix{\ar[r]&\mathcal{F}_{1}^{\cdot}\ar[r]^{u}&\mathcal{F}_{2}^{\cdot}\ar[r]^{v}&\mathcal{F}_{3}^{\cdot}\ar[r]^{w}&T\mathcal{F}_{1}^{\cdot}\ar[r]& }$$
be a triangle in $\text{Parf}_{X}$ such that $\mathcal{F}_{i}^{.}\in\text{Parf}^{0}_{X}$. We have a unique isomorphism,
$$\xymatrix{i_{X}(u,v,w):\det(\mathcal{F}_{1}^{\cdot})\otimes\det(\mathcal{F}_{3}^{\cdot})\ar[r]^(.70){\sim}&\det(\mathcal{F}_{2}^{\cdot}) }$$
which is functorial with respect to such triangles, i.e., the extended functor can be defined for triangles instead of short exact sequences of complexes.
\end{coro}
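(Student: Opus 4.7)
The plan is to reduce the triangle case to the short exact sequence case already built into the extended determinant functor. The crucial observation is that, up to isomorphism in the derived category, every distinguished triangle in $\text{Parf}_X$ arises from a mapping cone: given the triangle $\mathcal{F}_1^{\cdot}\xrightarrow{u}\mathcal{F}_2^{\cdot}\xrightarrow{v}\mathcal{F}_3^{\cdot}\xrightarrow{w}T\mathcal{F}_1^{\cdot}$, I would choose (after replacing $\mathcal{F}_1^{\cdot}$ and $\mathcal{F}_2^{\cdot}$ by quasi-isomorphic strictly perfect representatives if necessary) a representative of $u$ by an actual morphism of complexes, form its mapping cone $C(u)$, and obtain the canonical short exact sequence of complexes
$$0\rightarrow \mathcal{F}_2^{\cdot}\rightarrow C(u)\rightarrow T\mathcal{F}_1^{\cdot}\rightarrow 0,$$
together with a quasi-isomorphism $C(u)\xrightarrow{\sim}\mathcal{F}_3^{\cdot}$ realizing the isomorphism of triangles guaranteed by the axioms of the derived category.

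Next I would assemble the desired isomorphism from the three ingredients provided by Theorem \ref{ucd}: property II) applied to the displayed short exact sequence yields $i_X:\det(\mathcal{F}_2^{\cdot})\otimes\det(T\mathcal{F}_1^{\cdot})\xrightarrow{\sim}\det(C(u))$; property I) applied to the quasi-isomorphism $C(u)\xrightarrow{\sim}\mathcal{F}_3^{\cdot}$ yields $\det(C(u))\xrightarrow{\sim}\det(\mathcal{F}_3^{\cdot})$; and the definition of $\det$ on complexes, which alternates $\det^{*}$ and its inverse according to the parity of the index, produces a canonical isomorphism $\det(T\mathcal{F}_1^{\cdot})\cong\det(\mathcal{F}_1^{\cdot})^{-1}$. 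Concatenating and tensoring on both sides with $\det(\mathcal{F}_1^{\cdot})$ (and using the Picard structure on $\mathscr{P}is_X$ described in Example \ref{lbd}) produces the sought isomorphism $i_X(u,v,w):\det(\mathcal{F}_1^{\cdot})\otimes\det(\mathcal{F}_3^{\cdot})\xrightarrow{\sim}\det(\mathcal{F}_2^{\cdot})$.

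For functoriality, I would use that a morphism of distinguished triangles can, after suitable strictly perfect replacement, be lifted to a morphism of the corresponding short exact sequences of mapping cones. Compatibility then follows from property i) of Definition \ref{ex,de} applied to this morphism of short exact sequences, combined with the naturality of the shift isomorphism $\det(T\mathcal{F}_1^{\cdot})\cong\det(\mathcal{F}_1^{\cdot})^{-1}$. Uniqueness follows from the uniqueness statement in Theorem \ref{ucd}: any two such constructions restrict to the same isomorphism on short exact sequences of complexes, and such restrictions determine $(\det,\text{i})$ up to canonical isomorphism.

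The main obstacle is showing that the isomorphism just constructed depends only on the triangle $(u,v,w)$, and not on the auxiliary choices of strictly perfect representatives, of a lift of $u$ to an honest map of complexes, or of the quasi-isomorphism $C(u)\xrightarrow{\sim}\mathcal{F}_3^{\cdot}$. Two different lifts of $u$ are homotopic, and homotopic maps induce isomorphic mapping cones via an explicit isomorphism of short exact sequences; by property i) of Definition \ref{ex,de} this identifies the two resulting isomorphisms. Similarly, two candidate quasi-isomorphisms $C(u)\rightrightarrows\mathcal{F}_3^{\cdot}$ compatible with the triangle differ by a morphism that becomes the identity after passing to cohomology, and using hypothesis $\mathcal{F}_i^{\cdot}\in\text{Parf}^{0}_X$ together with fact b) recalled before the corollary, the two determinants of these quasi-isomorphisms coincide. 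This independence check is essentially the content of the proof.
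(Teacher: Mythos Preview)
The paper gives no argument of its own here; it simply cites Knudsen--Mumford \cite{Kund}, p.~43. Your mapping-cone reduction is correct and is essentially the argument one finds in that reference: realize the triangle via a cone, apply the already-constructed $i_X$ to the resulting short exact sequence $0\to\mathcal{F}_2^{\cdot}\to C(u)\to T\mathcal{F}_1^{\cdot}\to 0$, and use the shift identification $\det(T\mathcal{F}_1^{\cdot})\cong\det(\mathcal{F}_1^{\cdot})^{-1}$. You have also correctly isolated the one genuine point of content, namely why the hypothesis $\mathcal{F}_i^{\cdot}\in\text{Parf}^0_X$ is needed: the fill-in $C(u)\xrightarrow{\sim}\mathcal{F}_3^{\cdot}$ is not unique in the derived category, but any two choices differ by an automorphism inducing the identity on cohomology, and fact~(b) then forces the induced map on determinants to be the identity. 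Without the $\text{Parf}^0$ assumption this step fails, which is exactly why the extended determinant does not extend to arbitrary triangles.

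One small quibble: your appeal to Theorem~\ref{ucd} for uniqueness is not quite the right mechanism. That theorem pins down $(\det,i)$ on short exact sequences; uniqueness of $i_X(u,v,w)$ on triangles is rather a consequence of the well-definedness check you carry out in the last paragraph (independence of the lift of $u$ and of the chosen quasi-isomorphism to $\mathcal{F}_3^{\cdot}$), together with functoriality. This is a matter of phrasing, not substance.
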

\begin{proof}
 See \cite{Kund}, Page 43.
\end{proof}
\begin{rem}\label{pcd}
 From the previous corollary, it is to say that the extended determinant functor can be defined on triangle in $\text{Parf}$ which satisfies similar properties of short exact sequences from i) to iv) if the objects in the triangles
 are in $\text{Parf}^{0}$.  For a vector bundle $E$, if $Rf_{*}E$ is a strictly perfect complex under some suitable morphism where $Rf_{*}$ is viewed as the right derived functor of $f_{*}$, then
 the properties of extended determinant functor is valid for the strictly perfect complex.
\end{rem}

To conclude this section, we put the determinant functor, the virtual category, and the Picard category together to make the following definition.
\begin{defi}\label{vdk}
For a scheme $X$, we denote by $Vect(X)$ the exact category of vector bundles over $X$ and by $V(X): =V(Vect(X))$ (resp. $V(Y)$) the virtual category of vector bundles on $X$ (resp. $Y$).
Let $f:X\rightarrow Y$ be a smooth and projective morphism and  $Y$ carries an ample invertible sheaf. Then there is an induced functor from $V(X)$ to the Picard category $\mathscr{P}is_{Y}$ (the definition of $\mathscr{P}is_{Y}$is in the example \ref{lbd}) denoted 
by $\det Rf_{*}$, which is defined as follows:

In the Theorem \ref{ucd}, we have a unique functor $\det: \text{Perf-is}_{Y} \rightarrow \mathscr{P}is_{Y}$ . For any vector bundle $E$ from the exact category $Vect(X)$, it can be viewed a perfect complex $E^{.}$ with a term $E$ at 
degree $0$ and $0$ at other degree.   
For any perfect complex $E^{.}$ of $\mathcal{O}_{X}$-modules and the morphism $f$ in the definition, $Rf_{*}E^{.}$ is still a perfect complex of $\mathcal{O}_{Y}$-modules (see \cite{Berth}, IV, 2.12). Therefore, we have a functor
$\det Rf_{*} :(Vect(X), iso)\rightarrow\text{Perf-is}_{Y}$ where $(Vect(X), iso)$ is the category with the same objects from $Vect(X)$ and morphisms being only isomorphisms.
By the definition of the extended determinant functor $\det$, it can be verified that $\det Rf_{*}$ satisfies the same conditions from a) to d) with $[~]$ in Def. \ref{vircon}.
By the universality of the virtual category $V(X)$, the functor $\det Rf_{*}$ factors uniquely up to unique isomorphism through $(Vect(X), iso)\rightarrow V(X)$. More clearly, we have the following diagram:
$$\xymatrix{(Vect(X), iso)\ar[d]\ar[r]^(.60){Rf_{*}}&\text{Perf-is}_{Y}\ar[d]^{\det}\\
  V(X)\ar@{.>}[r]^{\det Rf_{*}}&\mathscr{P}is_{Y}     
        }$$
Meanwhile, there is a functor from
 $V(X)\rightarrow\mathscr{P}is_{Y}$ which is still denoted by $\det Rf_{*}$.
 \end{defi}

\section{The Adams-Riemann-Roch Theorem}
\subsection{The Frobenius morphism}
As in the introduction, for the $p$-th Adams-Riemann-Roch theorem in the case of characteristic $p>0$, the Frobenius morphism for the schemes of characteristic $p$ plays a key role in the proof of \cite{Pi}. 
For more information about Frobenius morphisms, see the electronic lectures of Professor Lars Kindler or Qing Liu's book \cite{Liu}. We say that a scheme $X$ is of characteristic $p$, if we have $p \mathcal{O}_{X}=0$.

(1) For every $\mathbb{F}_{p}-$algebra $A$, we have the classical Frobenius endomorphism
                                    $$Frob_{A}: A\rightarrow A$$
                                         $$\quad~~~~~~~ a\mapsto a^{p}.$$
Hence, for every affine $\mathbb{F}_{p}$-scheme $X=\text{Spec}~A$, we have an affine Frobenius morphism                                   $$F_{X}:X\rightarrow X.$$
We also see that $F_{X}$ is the identity morphism on the underlying topological space Sp$(X)$ because if $a^{p}\in\mathfrak{p}$ for any prime ideal $\mathfrak{p}$ of $A$, we have $a\in\mathfrak{p}$. We denote the corresponding  morphism over sheaves
by $Frob_{\mathcal{O}_{X}}$ such that for every open set $U$ of $X$ we have $Frob_{\mathcal{O}_{X}}(U)= Frob_{\mathcal{O}_{X(U)}}:\mathcal{O}_{X}(U)\rightarrow\mathcal{O}_{X}(U)$. To be precise, the affine Frobenius morphism
is the morphism $F_{X}:=(Id_{sp(X)},Frob_{\mathcal{O}_{X}})$. 

For any $\mathbb{F}_{p}$-algebra homomorphism $f:A\rightarrow B$, the following diagram commutes

 $$\xymatrix{A\ar[d]^{f}\ar[r]^{Frob_{A}}&A\ar[d]^{f}\\
                    B\ar[r]^{Frob_{B}}&B }$$

Therefore, we can define the absolute $Frobenius$ morphism in general case. 
\begin{defi}
Let $X$ be a $\mathbb{F}_{p}$-scheme, the $\textbf{absolute Frobenius}$ morphism on $X$, denoted by $F_{X}$, is a morphism $$X\longrightarrow X$$ such that for every open affine subset $U$ of $X$ we have $F_{X|U}:U\longrightarrow U$. Hence we have Frobenius morphism
 $F_{X}=(Id _{Sp(X)},\text{Frob}_{\mathcal{O}_{X}})$ for a general $\mathbb{F}_{p}$-scheme.
\end{defi}
Let $S$ be an $\mathbb{F}_{p}$-scheme and $X$ an $S$-scheme. It is easy to verify that the following diagram commutes
$$\xymatrix{X\ar[d]_{f}\ar[r]^{F_{X}}&X\ar[d]^{f}\\
                    S\ar[r]^{F_{S}}&S }$$
where $f$ is the structure morphism.

From the diagram above, we find that $F_{X}$ is not an $S$-morphism. But we can get an $S$-morphism by making use of the diagram above and introducing the relative Frobenius morphism.

\begin{defi}\label{rel,fr}
For any morphism $f:X\longrightarrow S$ of $\mathbb{F}_{p}$-schemes, the following diagram commutes
$$\xymatrix{X\ar@/_/[ddr]_{f}\ar@{.>}[dr]|{F_{X/S}}\ar@/^/[rrd]^{F_{X}}&  & \\
                            & X^{'}\ar[r]_{W_{X/S}}\ar[d]^{f^{'}}&X\ar[d]^{f}    \\
                             &S\ar[r]^{F_{S}}&S
                                 }~~~~~(1)$$
where $X^{'}$ is the fiber product of the morphism $f:X\longrightarrow S$ by base extension $F_{S}: S\longrightarrow S$. The morphism $F_{X/S}$ is called the $\textbf{relative Frobenius}$ morphism of the morphism $f$ which exists by the universal property of fiber product of schemes.
\end{defi}
\begin{exam} 
Let $A$ be a $\mathbb{F}_{p}$-algebra, $S=\text{Spec}~A$ an affine scheme, $f(x)=\sum_{k=0}^{n}f_{k}x^{k}$ a polynomial in $A[x]$ and $X=\text{Spec}~A[x]/(f(x))$ an affine $S$-scheme. It is easy to verify that $X^{'}\cong \text{Spec}~A[x]/(f^{'}(x))$, where $f^{'}(x)=\sum_{k=0}^{n}f_{k}^{p}x^{k}$. Therefore
we have a corresponding commutative diagram
$$\xymatrix{A[x]/(f(x))&  & \\
                       & A[x]/(f^{'}(x))\ar[lu]|{\widetilde{F}_{X/S}}&A[x]/(f(x))\ar[l]^{\widetilde{W}_{X/S}}\ar@/_/[llu]_{Frob_{A[x]/(f(x))}}   \\
                             &A\ar@/^/[uul]^{\tilde{f}}\ar[u]_{\tilde{f^{'}}}&A\ar[l]^{Frob_{A}}\ar[u]_{\tilde{f}}
                                 }~~~~~(2)$$
where $\tilde{f}$ (resp. $\tilde{f}^{'}$) send every $a\in A$ to its equivalence $\bar{a}$ in $A[x]/(f(x))$ (resp. $A[x]/(f^{'}(x)$), $\widetilde{W}_{X/S}$ sends the equivalence class of the monomial $ax$ to the equivalence class of the monomial $a^{p}x$ and $\widetilde{F}_{X/S}$ sends the equivalence class of the monomial $ax$ to the equivalence class of $ax^{p}$ in $A[x]/(f(x))$.
\end{exam}

In order to prove the next useful lemma, we state the \'{e}tale coordinates as follows:

(\'{E}tale coordinates) Let $f:X\rightarrow S$ be a smooth morphism and $q$ a point of $X$. Since $\Omega_{X/S}^{1}$ is locally free, there is an open affine neighborhood $U$ of the point $q$ and $x_{1},...,x_{n}\in\mathcal{O}(U)$ such that $x_{i}|_{q}=0$ for all $i$ and such that $dx_{1},...dx_{n}$ generate
$\Omega_{X/S}^{1}|_{U}$ (use Nakayama lemma). These sections define an $S$-morphism $h: U\rightarrow \mathbb{A}_{S}^{n}$. This morphism is \'{e}tale
by construction:

Because of smoothness we have the exact sequence of $\mathcal{O}(U)-$modules

$$0\longrightarrow h^{*}\Omega_{Z/S}^{1}\longrightarrow\Omega_{U/S}^{1}\longrightarrow\Omega_{U/Z}^{1}\longrightarrow 0$$
where $Z:=\mathbb{A}_{S}^{n}$ for brevity. By construction $\Omega_{U/Z}^{1}=0$, so $h:U\rightarrow Z$ is smooth and unramified, hence \'{e}tale. The $x_{1},...x_{n}$ are called \'{e}tale coordinates around $q$.

\begin{defi} Let $X,S$ be schemes and $f:X\rightarrow S$ a morphism of schemes. The morphism $f$ is said to be universally homeomorphism if for every scheme $T$ and for every morphism of schemes $g:T\rightarrow S$, the corresponding base change of $f$ is a homeomorphism.
\end{defi}
\begin{exam}\label{u,h}
 It is not difficult to check that the relative Frobenius morphism and the absolute Frobenius morphism are both  universally homeomorphisms since they are already homeomorphisms on topological spaces by verifying their definitions (Actually, they are identities on topological spaces).
\end{exam}
Based on the notations in Definition \ref{rel,fr}, we can state and prove the following important lemma.

\begin{lem}\label{l,f} Let $S$ be a scheme of positive characteristic (say $p$) and $f:X\rightarrow S$ a smooth morphism of pure relative dimension $n$. Then the relative Frobenius $F_{X/S}$ is finite and flat, and the $\mathcal{O}_{X^{'}}-$algebra $(F_{X/S})_{*}\mathcal{O}_{X}$ is locally free of rank $p^{n}$. In particular, if $f$ is \'{e}tale, then $F_{X/S}$ is an isomorphism.
\end{lem}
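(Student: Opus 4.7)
The plan is to reduce the lemma to an explicit local computation using étale coordinates. Since both assertions are local on $X$, and since $f:X\to S$ is smooth of relative dimension $n$, I may cover $X$ by open affine neighborhoods $U$ on which there exist étale coordinates, yielding an étale $S$-morphism $h:U\to\mathbb{A}_S^n$ (as recalled just before the statement).

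First I would verify the lemma directly for the affine space $\mathbb{A}_S^n$. Working locally, with $S=\text{Spec}(A)$ and $\mathbb{A}_S^n=\text{Spec}(A[t_1,\ldots,t_n])$, the relative Frobenius $F_{\mathbb{A}_S^n/S}$ corresponds on rings to the map out of the Frobenius twist $A'\otimes_A A[t_1,\ldots,t_n]\to A[t_1,\ldots,t_n]$ sending $1\otimes t_i\mapsto t_i^p$. An elementary calculation shows that the monomials $\{t^{\alpha}:0\le\alpha_i<p\}$ form a free basis, so $F_{\mathbb{A}_S^n/S}$ is finite and flat with $(F_{\mathbb{A}_S^n/S})_*\mathcal{O}_{\mathbb{A}_S^n}$ locally free of rank $p^n$.

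Next, the crucial step is that the relative Frobenius commutes with étale base change: for the étale morphism $h:U\to\mathbb{A}_S^n$ the naturally commutative square
\[
\xymatrix{U \ar[r]^{F_{U/S}} \ar[d]_h & U' \ar[d]^{h'} \\ \mathbb{A}_S^n \ar[r]^{F_{\mathbb{A}_S^n/S}} & (\mathbb{A}_S^n)'}
\]
is in fact cartesian. This rests on the topological invariance of the small étale site along universal homeomorphisms — and the relative Frobenius is such by Example \ref{u,h}: both $U$ and the base change $\mathbb{A}_S^n\times_{(\mathbb{A}_S^n)'}U'$ are étale over $U'$ through $h'$ and induce the same map on underlying spaces, hence must coincide. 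From this cartesian square, $F_{U/S}$ is obtained as the base change of $F_{\mathbb{A}_S^n/S}$ along $h'$, so finiteness, flatness, and local freeness of rank $p^n$ of the pushforward all descend.

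Gluing over an étale-coordinate cover of $X$ yields the first assertion. For the last claim, when $f$ is étale we have $n=0$, so $(F_{X/S})_*\mathcal{O}_X$ is locally free of rank $p^0=1$ over $\mathcal{O}_{X'}$; together with the canonical unit $\mathcal{O}_{X'}\to(F_{X/S})_*\mathcal{O}_X$, this forces $F_{X/S}$ to be an isomorphism. I expect the main obstacle to be rigorously establishing the cartesian-square assertion — that the relative Frobenius commutes with étale base change — which ultimately relies on the rigidity of étale morphisms along universal homeomorphisms; once this is in hand, every other step is a routine base-change argument or a direct polynomial computation.
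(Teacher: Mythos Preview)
Your strategy coincides with the paper's: work locally via \'etale coordinates, compute the relative Frobenius explicitly for $\mathbb{A}^n_S$, and reduce the general case to affine space through a cartesian square relating $F_{U/S}$ to $F_{\mathbb{A}^n_S/S}$. The one genuine difference is in how that cartesian square is established. The paper first proves the \'etale case directly---observing that $F_{X/S}$ is \'etale (since $f$ and $f'$ are) and a universal homeomorphism, hence an open immersion and a homeomorphism, hence an isomorphism---and then applies this to the \'etale map $h:U\to\mathbb{A}^n_S$ to see that $F_{U/\mathbb{A}^n_S}$ is an isomorphism, giving the factorization $F_{U/S}=\psi\circ F_{U/\mathbb{A}^n_S}$ with $\psi$ the base change of $F_{\mathbb{A}^n_S/S}$. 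You instead invoke topological invariance of the \'etale site along a universal homeomorphism to get the cartesian square directly, and recover the \'etale case at the end from the rank being $p^0=1$. Both routes are valid; the paper's is more elementary and self-contained, while yours packages the key input into a citable theorem. One small correction to your sketch: the phrase ``both $U$ and $\mathbb{A}_S^n\times_{(\mathbb{A}_S^n)'}U'$ are \'etale over $U'$ through $h'$'' is not right as written---neither is \'etale over $U'$ in general. What you want is that both are \'etale over $\mathbb{A}^n_S$ (via $h$ and via base change of $h'$ along $F_{\mathbb{A}^n_S/S}$, respectively), and the comparison map between them is then an \'etale universal homeomorphism, hence an isomorphism.
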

\begin{proof} We firstly show that $F_{X/S}$ is an isomorphism when $f$ is \'{e}tale, i.e., smooth of relative dimension $0$. In the diagram (1) (we still use the notation of Definition \ref{rel,fr} in the entire proof), we find that $f^{'}$ is
 \'{e}tale and that $f=f^{'}\circ F_{X/S}$ is \'{e}tale, so $F_{X/S}$ is \'{e}tale(by the properties of base change and composition of \'{e}tale morphism). But in the diagram (1), actually $W_{X/S},F_{X}$ induce identity on topological spaces and $F_{X}=W_{X/S}\circ F_{X/S}$, therefore the relative Frobenius morphism also induces an identity on topological spaces.
 In addition, $F_{X/S}$ is an open immersion (in [SGAI.5.1], it is proved that any morphism of finite type is open immersion if and only if the morphism is \'{e}tale and radical. Radical means universally injective which is similar with  universally closed morphism, i.e., injective itself and also injective for any base extension. But universally homeomorphism is 
 equivalent to integral, surjective and universally injective, 
and Example \ref{u,h} said that $F_{X/S}$ is a universal homeomorphism.).
 Putting homeomorphism and open immersion together, we have that $F_{X/S}$ is an isomorphism.

 For $Z=\mathbb{A}^{n}_{S}$ and the projection $f: Z \rightarrow S$, we have a Cartesian diagram 
 $$\xymatrix{Z\ar@/_/[ddr]_{f}\ar@{.>}[dr]|{F_{Z/S}}\ar@/^/[rrd]^{F_{Z}}&  & \\
                            & Z^{'}\ar[r]_{W_{Z/S}}\ar[d]^{f^{'}}&Z\ar[d]^{f}    \\
                             &S\ar[r]^{F_{S}}&S
                                 }$$
 
 as in Definition \ref{rel,fr} and the topological space $\text{Sp}(Z)=\text{Sp}(Z^{'})$. By the definition of $Z$, we have $\mathcal{O}_{Z}=\mathcal{O}_{S}[t_{1},...,t_{n}]$. So the sheaf morphism $F^{\sharp}_{Z/S}:\mathcal{O}_{Z^{'}}\rightarrow (F_{Z/S})_{*}{\mathcal{O}}_{Z}$ is the map

     $$F^{\sharp}_{Z/S}:\mathcal{O}_{S}[t_{1},...,t_{n}]\rightarrow \mathcal{O}_{S}[t_{1},...,t_{n}]$$
     $$t_{i}\mapsto t_{i}^{p}.$$
Hence the monomials $\prod_{i=1}^{n}t_{i}^{k_{i}}$, with $k_{i}$ integers such that $0\leq k_{i}\leq p-1$, form a basis of $(F_{Z/S})_{*}{\mathcal{O}}_{Z}$ over $\mathcal{O}_{Z^{'}}$. Therefore $F_{Z/S}$ is indeed finite locally free of rank $p^{n}$, hence also flat.

For the general case, we can assume by the smoothness of $f$ that locally on $X$, we have the factorization $\xymatrix{X\ar[r]^{h}& Z\ar[r]^{g}& S}$, where $g$ is a projection $Z=\mathbb{A}^{n}_{S}\rightarrow S$ and $h$ is \'{e}tale. We have the following diagram
 $$\xymatrix{X\ar[rr]^{F_{X/S}}\ar[dr]^{F_{X/Z}}\ar[dd]_{h}& &X^{'}=X\times_{F_{S}}S\ar[r]\ar[dd]&X\ar[dd]^{h}\\
                        &X\times_{F_{Z}}Z\ar[ld]\ar[ur]^{\psi}&  &   \\
   Z\ar[rr]^{F_{Z/S}}\ar[drr]_{g}& &Z^{'}=S\times_{F_{S}}Z\ar[r]\ar[d]&Z\ar[d]^{g}\\
                     & & S\ar[r]^{F_{S}}& S                   } (2)$$
In the diagram $(2)$, the right-most part ($X^{'},Z^{'}$, i.e., the fiber products of schemes) is Cartesian. Meanwhile, we have $F_{X/S}=\psi\circ F_{X/Z}$. By the first part, we know that $F_{X/Z}$ is isomorphism because $h$ is \'{e}tale. $\psi$ is finite locally free of rank $p^{n}$ which is from the base 
change of $F_{Z/S}$ when $F_{Z/S}$ is finite locally free of rank $p^{n}$ for $Z=\mathbb{A}^{n}_{S}\rightarrow S$. 
Therefore, $F_{X/S}$ is finite locally free of rank of $p^{n}$, and flat also.
\end{proof}
\begin{exam} Let $X$, $S$ be affine, denoted by $X=\text{Spec}~B$, $S=\text{Spec}~A$, where $A$ is of characteristic $p>0$. Then $X^{'}=B\otimes_{F_{A}}A$, i.e., $ab\otimes 1=b\otimes a^{p}$, and the relative $Frobenius~X\rightarrow X^{'}$ is given by $a\otimes b\mapsto ab^{p}$.

If $B=A[t]$, then $A\otimes_{F_{A}}A[t]\cong A[t]$, $W_{X/S}$ is given by $at\rightarrow a^{p}t$ and $F_{X/S}$ by $at\rightarrow at^{p}$ for $a\in A$ ($W_{X/S}$ is the morphism from $X^{'}$ to $X$ as in Definition \ref{rel,fr}). Hence the image of $F_{A[t]/A}$ is $A[t^{p}]\subseteq A[t]$ and $A[t]$ is freely generated by $t^{i}$, $i=0,...,p$ over $A[t^{p}]$.
\end{exam}

\subsection{The construction of the Bott element}
\begin{defi}\label{ad,2}
For any integer $k\geq 2$, the symbol $\theta^{k}$ refers to an operation, which associates an element of $K_{0}(X)$ to any locally free coherent sheaf on a quasi-compact scheme $X$. It satisfies the following three properties: 

(1) For any invertible sheaf $L$ over $X$, we have

$$\theta^{k}(L)=1+L+\cdots +L^{\otimes k-1};$$

(2) For any short exact sequence $0\longrightarrow E^{'}\longrightarrow E\longrightarrow E^{''}\longrightarrow 0$ of locally free coherent
sheaves on $X$ we have

         $$\theta^{k}(E)=\theta^{k}(E^{'})\otimes\theta^{k} (E^{''});$$

(3) For any morphism of schemes $g : X^{'}\longrightarrow X$ and any locally free coherent sheaf $E$ over $X$ we have

                  $$g^{*}(\theta^{k}(E))=\theta^{k}(g^{*}(E)).$$

If $E$ is a locally free coherent sheaf on a quasi-compact scheme $X$, then the element $\theta^{k}(E)$ is often called the $k$-th Bott element.
\end{defi}

\begin{pro}
The operation $\theta^{k}$ which satisfies three properties above can be defined uniquely.
\end{pro}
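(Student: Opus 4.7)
The plan is to reduce to the line-bundle case, where property (1) gives the answer directly, via the splitting principle. Concretely, given a locally free coherent sheaf $E$ of rank $r$ on the quasi-compact scheme $X$, I will exhibit a morphism $g\colon X'\to X$ such that the pullback $g^{*}\colon K_{0}(X)\to K_{0}(X')$ is injective and $g^{*}E$ admits an admissible filtration $0=F_{0}\subset F_{1}\subset\cdots\subset F_{r}=g^{*}E$ whose successive quotients $L_{i}=F_{i}/F_{i-1}$ are line bundles. Granted such a $g$, property (3) gives $g^{*}(\theta^{k}(E))=\theta^{k}(g^{*}E)$, iterated use of (2) on the filtration yields $\theta^{k}(g^{*}E)=\prod_{i=1}^{r}\theta^{k}(L_{i})$, and property (1) fixes each factor as $\sum_{j=0}^{k-1}L_{i}^{\otimes j}$. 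The injectivity of $g^{*}$ then pins down $\theta^{k}(E)$ uniquely in $K_{0}(X)$.

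To produce $g$, I iterate the projective bundle construction. Setting $p_{1}\colon X_{1}=\mathbb{P}(E)\to X$ and using the tautological short exact sequence $0\to S_{1}\to p_{1}^{*}E\to\mathcal{O}_{\mathbb{P}(E)}(1)\to 0$ splits off a line bundle and leaves a sub-bundle $S_{1}$ of rank $r-1$. The projective bundle formula identifies $K_{0}(X_{1})$ as a free $K_{0}(X)$-module of rank $r$, so $p_{1}^{*}$ is a split injection. Applying the construction recursively to $S_{1}$ over $X_{1}$, and so on for $r-1$ further steps, one obtains a tower whose total morphism $g\colon X'\to X$ enjoys the two properties above: compositions of split-injective pullbacks remain injective, and the successive tautological quotients assemble into the desired full flag of $g^{*}E$.

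The main obstacle will be justifying the projective bundle formula, and hence injectivity of $p_{1}^{*}$, at the level of generality of quasi-compact schemes used in the paper. This is classical in favourable settings and follows in general from SGA 6 (Exp.\ VI) or Thomason--Trobaugh style arguments; in the applications treated later in the paper the base is assumed to carry an ample invertible sheaf, which makes it entirely available. With this input the argument is purely formal: the three defining properties leave no choice in how $\theta^{k}$ acts on a general bundle, since its values are forced by its values on line bundles together with multiplicativity along an admissible filtration, both of which are rigidly normalized.
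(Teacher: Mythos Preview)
Your splitting-principle argument is exactly the standard route taken in the references the paper cites (Manin, \S 16; SGA 6, Exp.~VII), so in that sense you are in line with the paper, which gives no proof of its own beyond pointing to those sources.

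One genuine gap: your argument establishes \emph{uniqueness} but not \emph{existence}. You show that if an operation $\theta^{k}$ satisfying (1)--(3) exists, then $g^{*}(\theta^{k}(E))$ is forced to equal $\prod_{i}\sum_{j=0}^{k-1}L_{i}^{\otimes j}$, and injectivity of $g^{*}$ then determines $\theta^{k}(E)$. But you have not shown that this product actually lies in the image of $g^{*}$, i.e.\ that there is an element of $K_{0}(X)$ mapping to it. The standard fix is to observe that $\prod_{i}\sum_{j=0}^{k-1}L_{i}^{\otimes j}$ is a symmetric polynomial in the classes $L_{1},\ldots,L_{r}$, hence a polynomial in the elementary symmetric functions $\lambda^{1}(g^{*}E),\ldots,\lambda^{r}(g^{*}E)$; since these are pulled back from $\lambda^{j}(E)\in K_{0}(X)$, the product descends. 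Equivalently, one writes down a universal integral polynomial in the $\lambda$-operations and checks properties (1)--(3) directly. Without this step your proof is incomplete, since the proposition asserts that $\theta^{k}$ \emph{can be defined}, not merely that any definition is unique.
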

\begin{proof}
See \cite{Man}. Lemma 16.2. Subsection 16, or SGA, VII.
\end{proof}

On a quasi-compact scheme of characteristic $p$, Pink and R\"{o}ssler constructed an explicit representative of the $p$-th Bott element (see \cite{Pi}, Sect. 2). 

We recall the construction:

Let $p$ be a prime number and $Z$ a scheme of characteristic $p$. Let $E$ be a locally free coherent sheaf $Z$. For any integer $k\geq 0$ let
$\text{Sym}^{k}(E)$ denote the $k$-th symmetric power of $E$. Then 
$$\text{Sym}(E):=\bigoplus _{k\geq 0}\text{Sym}^{k}(E)$$ is quasi-coherent graded $\mathcal{O}_{Z}$-algebra, called the symmetric algebra of $E$. Let
$\mathcal{J}_{E}$ denote the graded sheaf of ideals of $\text{Sym}(E)$ that is locally generated by the sections $e^{p}$ of $\text{Sym}^{p}(E)$ for
all sections $e$ of $E$, and set $$\tau(E):=\text{Sym}(E)/\mathcal{J}_{E}.$$
Locally this construction means the following. Consider an open subset $U\subset Z$  such that $E|_{U}$ is free, and
choose a basis $e_{1},\ldots, e_{r}$. Then $\text{Sym}(E)|_{U}$ is the polynomial algebra over $\mathcal{O}_{Z}$ in the variables $e_{1},\ldots,e_{r}$.
Since $Z$ has characteristic $p$, for any open subset $V\subset U$ and any sections $a_{1},\ldots,a_{r} \in \mathcal{O}_{Z}(V)$ we have 
$$(a_{1}e_{1}+\ldots +a_{r}e_{r})^{p}=a_{1}^{p}e_{1}^{p}+\ldots +a_{r}^{p}e_{r}^{p}.$$
It follows that $\mathcal{J}_{E}|_{U}$ is the sheaf of ideals of $\text{Sym}(E)|_{U}$ that  is generated by $e_{1}^{p}\ldots,e_{r}^{p}$. Clearly that description is independent of
the choice of basis and compatible with localization; hence it can be used as an equivalent definition of $\mathcal{J}_{E}$ and $\tau(E)$.
The local description also implies that $\tau(E)|_{U}$ is free over $\mathcal{Z}|_{U}$ with the basis the images of the monomials $e_{1}^{i_{1}}\cdots e_{r}^{i_{r}}$ for all choices of exponents $0\leqq i_{j}< p$. 

It can be showed that $\tau(E)$ satisfies the defining properties of the $p$-th Bott element. In other words, we have the following proposition (see \cite{Pi}, Prop. 2.6).

\begin{pro}\label{btp}
For any locally free coherent sheaf $E$ on a quasi-compact scheme $Z$ of characteristic $p>0$, we have $\tau(E)=\theta ^{p}(E)$ in $K_{0}(Z)$.
\end{pro}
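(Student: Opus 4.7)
The plan is to use the uniqueness of $\theta^p$ established in the preceding proposition: since $\theta^p$ is determined in $K_0(Z)$ by the three axioms of Definition \ref{ad,2}, it suffices to verify that the assignment $E\mapsto [\tau(E)]\in K_0(Z)$ satisfies each of these axioms, from which the identification $[\tau(E)]=\theta^p(E)$ follows immediately.

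Axioms (1) and (3) should be essentially direct computations. For (1), if $L$ is invertible then $\mathrm{Sym}^k(L)=L^{\otimes k}$, and locally on a trivialising open $U$ with generator $e$ of $L|_U$ the ideal $\mathcal{J}_L|_U$ is generated by $e^p$; hence $\mathcal{J}_L=\bigoplus_{k\geq p}L^{\otimes k}$ globally, and $\tau(L)=\bigoplus_{k=0}^{p-1}L^{\otimes k}$, whose class is $1+[L]+\cdots+[L]^{p-1}$ as required. For (3), the formation of $\mathrm{Sym}$ commutes with arbitrary pull-back, and the compatibility $g^*(e^p)=(g^*e)^p$ immediately yields $g^*\mathcal{J}_E=\mathcal{J}_{g^*E}$, so $g^*\tau(E)\cong\tau(g^*E)$ as $\mathcal{O}_{X'}$-modules, which gives the desired equality in $K_0(X')$.

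The substantive step is axiom (2). Given $0\to E'\to E\to E''\to 0$, I would consider the image $J\subset\tau(E)$ of the ideal of $\mathrm{Sym}(E)$ generated by $E'$; the surjection $\mathrm{Sym}(E)\to\mathrm{Sym}(E'')$ with kernel this ideal descends to a surjection $\tau(E)/J\xrightarrow{\sim}\tau(E'')$. Working locally, fix a basis $e_1',\ldots,e_s'$ of $E'$ and lifts $e_1'',\ldots,e_t''$ of a basis of $E''$ to $E$. The characteristic $p$ identity $\bigl(\sum a_ie_i'+\sum b_je_j''\bigr)^p=\sum a_i^p(e_i')^p+\sum b_j^p(e_j'')^p$ shows that $\mathcal{J}_E$ is locally generated by the $(e_i')^p$ and the $(e_j'')^p$, whence $\tau(E)|_U\cong\tau(E')|_U\otimes_{\mathcal{O}_U}\tau(E'')|_U$ as $\mathcal{O}_U$-modules. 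Consequently the associated graded sheaves of the $J$-adic filtration on $\tau(E)$ are isomorphic to $\tau^k(E')\otimes\tau(E'')$, where $\tau^k(E')$ denotes the degree-$k$ component of the finite graded sheaf $\tau(E')$. Summing the classes of these locally free pieces in $K_0(Z)$ then gives $[\tau(E)]=[\tau(E')]\cdot [\tau(E'')]$.

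The principal obstacle is the global handling of axiom (2): the local isomorphism $\tau(E)\cong\tau(E')\otimes\tau(E'')$ depends on a choice of splitting of the short exact sequence and is not canonical, so one has to ensure that the $J$-adic filtration is intrinsic and that its associated graded pieces patch together to give $\tau^k(E')\otimes\tau(E'')$ globally. The essential input that makes this work is precisely the characteristic-$p$ additivity of the Frobenius, $(e'+e'')^p=(e')^p+(e'')^p$; in any other characteristic the filtration would fail to split on the graded level, and the identification of $[\tau(E)]$ with a tensor product in $K_0(Z)$ would break down.
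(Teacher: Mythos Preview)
Your proposal is correct and follows exactly the approach the paper indicates: the paper does not give its own detailed proof but states that ``$\tau(E)$ satisfies the defining properties of the $p$-th Bott element'' and refers to \cite{Pi}, Prop.~2.6, which is precisely the verification of the three axioms you carry out. Your handling of axiom~(2) via the intrinsic $J$-adic filtration with associated graded $\tau^k(E')\otimes\tau(E'')$, together with the explicit use of the Frobenius additivity $(e'+e'')^p=(e')^p+(e'')^p$, is the argument of Pink--R\"ossler.
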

\subsection{The Adams Riemann Roch Theorem in positive characteristic}\label{section ARR char p}
In order to state the $p$-th  Adams-Riemann-Roch theorem in positive characteristic in \cite{Pi}, we firstly give the conditions of the Adams-Riemann-Roch theorem.  We assume that $f:X\rightarrow Y$ is a projective local complete intersection morphism and $Y$ is quasi-compact scheme 
and carries an ample invertible sheaf. Furthermore, we make the supplementary 
hypothesis that $f$ is smooth and that $Y$ is a scheme of
characteristic $p>0$.  Let $\Omega_{f}$, the relative differentials of $f$, be rank $r$, which is a locally constant function. 

Based on the condition above, we draw the diagram again, i.e., the commutative diagram
$$\xymatrix{X\ar@/_/[ddr]_{f}\ar@{.>}[dr]|{F_{X/Y}}\ar@/^/[rrd]^{F_{X}}&  & \\
                            & X^{'}\ar[r]_{J}\ar[d]^{f^{'}}&X\ar[d]^{f}    \\
                             &Y\ar[r]^{F_{Y}}&Y
                                 }~~~~~(4)$$
 where $ F_{X}$ and $F_{Y}$ are obvious absolute Frobenius morphisms respectively and the square is Cartesian. We also denote the relative Frobenius morphism of the morphism $f$ by $F:=F_{X/Y}$ for
simplicity. In the following propositions and proofs until the end of the note, we will use these notations.

Since the pull-back $F^{*}$ is adjoint to $F_{*}$ (see \cite{Hart}, Page 110), there is a natural morphism of $\mathcal{O}_{X}$-algebras $F^{*}F_{*}\mathcal{O}_{X}\rightarrow\mathcal{O}_{X}$. Let $I$
be the kernel of the natural morphism, which is a sheaf of the ideal of $F^{*}F_{*}\mathcal{O}_{X}$ by the definition. In \cite{Pi}, the following definition is made
$$Gr(F^{*}F_{*}\mathcal{O}_{X}):=\bigoplus_ {k\geq 0}I^{k}/ I^{k+1}$$
which denote the associated graded sheaf of $\mathcal{O}_{X}$-algebras. Let $\Omega_{f}$ be the sheaf of relative differentials of $f$.
Also, they proved the following key proposition which can be used to prove the $p$-th Adams-Riemann-Roch theorem in positive characteristic (see \cite{Pi}, Prop. 3.2).
\begin{pro}\label{gci}
There is a natural isomorphism of $\mathcal{O}_{X}$-modules
$$ I/ I^{2}\cong \Omega_{f}$$
and a natural isomorphism of graded  $\mathcal{O}_{X}$-algebras
$$\tau(I/ I^{2})\cong Gr(F^{*}F_{*}\mathcal{O}_{X})$$
 \end{pro}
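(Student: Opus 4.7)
The plan is to reduce both isomorphisms to an explicit computation for the relative affine space $\mathbb{A}^n_Y$, and then to check that the natural candidate maps are intrinsic enough to glue. By the proof of Lemma~\ref{l,f} we may work Zariski-locally on $X$ and assume a factorization $X\xrightarrow{h} Z \xrightarrow{g} Y$ with $h$ \'{e}tale and $Z=\mathbb{A}^n_Y$. Then $F_{X/Y}=\psi\circ F_{X/Z}$ with $F_{X/Z}$ an isomorphism and $\psi$ the base change of $F_{Z/Y}$ along $h$, so the sheaves $F_*\mathcal{O}_X$, $F^*F_*\mathcal{O}_X$, the counit, its kernel $I$, and the quotients $I^k/I^{k+1}$ all pull back from the analogous objects for $F_{Z/Y}$. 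Since $\Omega_{X/Y}$ and the functor $\tau$ also commute with \'{e}tale pullback, it suffices to prove both isomorphisms in the case $X=\mathbb{A}^n_Y$.

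Next I would carry out the computation on $\mathbb{A}^n_Y=\text{Spec}\,A[t_1,\ldots,t_n]$. Here $X'=\text{Spec}\,A[T_1,\ldots,T_n]$ with $F^\#(T_i)=t_i^p$, so
\[
F^*F_*\mathcal{O}_X \;\cong\; A[t_1,\ldots,t_n]\otimes_{A[T_1,\ldots,T_n]} A[t_1,\ldots,t_n].
\]
Setting $\xi_i:=1\otimes t_i - t_i\otimes 1$ and using the characteristic-$p$ identity $(a-b)^p=a^p-b^p$, one finds $\xi_i^p = 1\otimes t_i^p - t_i^p\otimes 1 = 0$ because $t_i^p=F^\#(T_i)$ passes through the tensor over $A[T_1,\ldots,T_n]$. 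Hence $F^*F_*\mathcal{O}_X\cong \mathcal{O}_X[\xi_1,\ldots,\xi_n]/(\xi_1^p,\ldots,\xi_n^p)$, the kernel $I$ of the counit is $(\xi_1,\ldots,\xi_n)$, $I/I^2$ is free on $\{\xi_i\}$, and $Gr(F^*F_*\mathcal{O}_X)$ is the graded algebra $\mathcal{O}_X[\xi_1,\ldots,\xi_n]/(\xi_i^p)$ with the $\xi_i$ in degree one. Sending $dt_i\mapsto\xi_i$ gives the first isomorphism, and extending the tautological inclusion $I/I^2\hookrightarrow Gr^1(F^*F_*\mathcal{O}_X)$ multiplicatively from $\text{Sym}(I/I^2)$ produces a surjection that kills $\mathcal{J}_{I/I^2}$ and descends to the second.

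To turn this coordinate-dependent picture into a natural global statement, I would define the first map globally by $f\mapsto 1\otimes f - f\otimes 1$: this vanishes on sections coming from $\mathcal{O}_Y$ (they coincide in the two factors) and satisfies the Leibniz rule modulo $I^2$, so it factors through $\Omega_f\to I/I^2$. The second map is defined globally by extending the inclusion $I/I^2\hookrightarrow Gr^1(F^*F_*\mathcal{O}_X)$ multiplicatively, the essential point being that $(1\otimes f - f\otimes 1)^p=0$ in $F^*F_*\mathcal{O}_X$; this holds intrinsically because $f^p=F^\#(W_{X/Y}^\#(f))$ always lies in the image of $F^\#$. The main obstacle will be verifying that the resulting graded map $\tau(I/I^2)\to Gr(F^*F_*\mathcal{O}_X)$ is injective, not merely surjective, and in particular that $I^k/I^{k+1}$ does not collapse below the expected rank; I would settle this by a local rank count using Lemma~\ref{l,f} to conclude that $F^*F_*\mathcal{O}_X$ is locally free of rank $p^n$, which matches the total rank of $\tau(I/I^2)$ on the nose and forces the surjection to be an isomorphism.
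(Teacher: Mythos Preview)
The paper does not give its own proof of this proposition; it simply cites \cite{Pi}, Prop.~3.2. Your proposal is therefore not being compared against anything in the present paper, but it is a correct reconstruction of the argument. The strategy---define the maps intrinsically (the $\mathcal{O}_Y$-derivation $f\mapsto 1\otimes f - f\otimes 1$ into $I/I^2$, and the multiplicative extension of $I/I^2\hookrightarrow Gr^1$ to $\tau(I/I^2)$, using $(1\otimes f - f\otimes 1)^p=0$ since $f^p$ lies in the image of $F^\#$), then verify bijectivity \'etale-locally via the explicit description of $F^*F_*\mathcal{O}_X$ for $\mathbb{A}^n_Y$---is exactly the approach taken in \cite{Pi}.

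One small looseness worth tightening: the final rank count, as phrased, is not quite enough. Knowing that $F^*F_*\mathcal{O}_X$ is locally free of rank $p^n$ does not by itself guarantee that the associated graded $Gr(F^*F_*\mathcal{O}_X)$ is locally free, so a surjection from a rank-$p^n$ bundle onto it is not automatically injective. What actually closes the argument is your explicit local identification $F^*F_*\mathcal{O}_X\cong \mathcal{O}_X[\xi_1,\ldots,\xi_n]/(\xi_1^p,\ldots,\xi_n^p)$: from this one reads off directly that $I^k/I^{k+1}$ is free on the degree-$k$ monomials in the $\xi_i$, so $Gr$ is itself locally free of rank $p^n$ and the surjection $\tau(I/I^2)\to Gr$ is visibly the identity in these coordinates. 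In other words, the local computation already gives injectivity without a separate rank argument; you should phrase the conclusion that way.
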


According to the proposition above, directly there are isomorphisms $$Gr(F^{*}F_{*}\mathcal{O}_{X})\cong\tau(I/ I^{2})\cong \tau(\Omega_{f}).$$ 
Moreover, Proposition \ref{btp} also
implies $\tau(\Omega_{f})\cong \theta ^{p}(\Omega_{f})$. In the Grothendieck groups, we have $Gr(F^{*}F_{*}\mathcal{O}_{X})\cong F^{*}F_{*}\mathcal{O}_{X}$, then the equality $F^{*}F_{*}\mathcal{O}_{X}\cong \theta ^{p}(\Omega_{f})$ holds by viewing them as elements of the Grothendieck groups.

In the case of characteristic $p>0$, the Adams operation $\psi^{p}$ can be described, especially. We will give the proposition after recalling the definition of the Adams operation.
\begin{defi}\label{ad,1}
Let $X$ be a quasi-compact scheme. For any positive integer $k\geq 2$, the $k$-th Adams operation is the functorial endomorphism $\psi^{k}$ of unitary ring $K_{0}(X)$ which is uniquely determined by the following two conditions,

(1) $\psi^{k}f^{*}=f^{*}\psi^{k}$ for any morphism of Noetherian schemes $f:X\longrightarrow Y$.

(2) For any invertible sheaf $L$ over $X$, $\psi^{k}(L)=L^{\otimes k}$.
\end{defi}
A more interesting proposition related to the Frobenius morphism and the Adams operation is the following:
\begin{pro}\label{frebad}
  For a scheme $Z$ of characteristic $p>0$ and its absolute Frobenius morphism $F_{Z}: Z\rightarrow Z$,  we claim that the pullback $F_{Z}^{*}: K_{0}(Z)\rightarrow K_{0}(Z)$ is  just the $p$-th Adams operation $\psi^{p}$. 
\end{pro}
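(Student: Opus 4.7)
The plan is to verify that the pullback $F_{Z}^{*}: K_{0}(Z)\rightarrow K_{0}(Z)$ satisfies the two defining properties of the $p$-th Adams operation given in Definition \ref{ad,1}, and then to invoke the uniqueness stated there. First I note that $F_{Z}^{*}$ is automatically a unitary ring endomorphism of $K_{0}(Z)$, since it is induced by a morphism of schemes and so respects tensor product and the class of $\mathcal{O}_{Z}$. Hence the two properties to check are (i) compatibility with arbitrary pullbacks and (ii) the formula on invertible sheaves.

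For property (ii), let $L$ be an invertible sheaf on $Z$, trivialized over an open cover $\{U_{i}\}$ with transition functions $g_{ij}\in\mathcal{O}_{Z}^{\times}(U_{i}\cap U_{j})$. By the definition of the absolute Frobenius recalled in Section 3.1, $F_{Z}$ is the identity on the underlying topological space and acts on sections of $\mathcal{O}_{Z}$ by $a\mapsto a^{p}$. In particular $F_{Z}^{-1}(U_{i})=U_{i}$, and the cocycle describing the pullback line bundle $F_{Z}^{*}L$ with respect to the same cover is $F_{Z}^{\sharp}(g_{ij})=g_{ij}^{p}$. Since the cocycle $\{g_{ij}^{p}\}$ defines the line bundle $L^{\otimes p}$, we obtain the canonical isomorphism $F_{Z}^{*}L\cong L^{\otimes p}$, which is exactly property (2) of Definition \ref{ad,1}.

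For property (i), let $g:Z'\rightarrow Z$ be any morphism of $\mathbb{F}_{p}$-schemes. I would verify the square
\[
\xymatrix{Z'\ar[r]^{F_{Z'}}\ar[d]_{g}&Z'\ar[d]^{g}\\ Z\ar[r]^{F_{Z}}&Z}
\]
is commutative; this is the same naturality diagram already used in Section 3.1 (just before Definition \ref{rel,fr}), whose validity comes from the fact that any ring homomorphism commutes with the $p$-th power map in characteristic $p$. Passing to Grothendieck groups by contravariance gives $F_{Z'}^{*}\circ g^{*}=g^{*}\circ F_{Z}^{*}$, which is property (1) of Definition \ref{ad,1}.

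Having checked both characterizing properties, the uniqueness statement built into Definition \ref{ad,1} (whose justification rests on the splitting principle, reducing any class in $K_{0}(Z)$ to a combination of classes of line bundles on some faithful base change) forces $F_{Z}^{*}=\psi^{p}$. The proof is therefore essentially a direct verification, and there is no substantial obstacle beyond making the local computation on transition functions in the line bundle case precise; the naturality in (i) is automatic once the commutativity of the Frobenius square is noted.
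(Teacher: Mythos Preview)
Your proposal is correct and follows exactly the line the paper indicates: the paper's own proof is simply a citation (to \cite{Bern 1}, Prop.~2.15) together with the remark that the result ``is also a consequence of the splitting principle''; you have written out that splitting-principle argument in full. There is nothing to correct, and no meaningful difference in approach to compare.
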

\begin{proof}
 This is a well-known fact (see \cite{Bern 1}, Pag. 64, Proposition 2.15), which is also a consequence of the splitting principle (see \cite{Man}, Par. 5).
\end{proof}
\begin{theo}
The Adams-Riemann-Roch theorem is true under the assumption given in the beginning of this subsection, i.e., the following equality 
$$\psi^{p}(R^{\bullet}f_{*}(E))=R^{\bullet}f_{*}(\theta ^{p}(\Omega_{f})^{-1}\otimes\psi^{p}(E)) $$
holds in $K_{0}(Y)[\frac{1}{p}]:=K_{0}(Y)\otimes_{\mathbb{Z}}\mathbb{Z}[\frac{1}{p}]$.
\end{theo}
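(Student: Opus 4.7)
The plan is to use the Cartesian square in diagram (4) to translate the Adams operator $\psi^p$ on both sides into pullbacks by the absolute Frobenius morphisms $F_X$ and $F_Y$, and then to invoke Proposition \ref{gci} combined with Proposition \ref{btp} to introduce the Bott element $\theta^p(\Omega_f)$ at the decisive step.

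For the left-hand side, Proposition \ref{frebad} identifies $\psi^p$ with $F_Y^*$. Since $f$ is smooth (hence flat) and projective, proper flat base change gives
\[
\psi^p(R^\bullet f_*E)=F_Y^*(R^\bullet f_*E)=R^\bullet f'_*(J^*E)\qquad\text{in }K_0(Y).
\]
For the right-hand side, we decompose $f=f'\circ F$ where $F:=F_{X/Y}$ is finite flat by Lemma \ref{l,f}; in particular $R^\bullet f_*=R^\bullet f'_*\circ F_*$. Combining the factorization $F_X=J\circ F$ with Proposition \ref{frebad} gives $\psi^p(E)=F_X^*E=F^*(J^*E)$, so by the projection formula for $F$,
\[
R^\bullet f_*\bigl(\theta^p(\Omega_f)^{-1}\otimes\psi^p E\bigr)=R^\bullet f'_*\bigl(F_*(\theta^p(\Omega_f)^{-1})\otimes J^*E\bigr).
\]

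Proposition \ref{gci} together with Proposition \ref{btp} yields $\theta^p(\Omega_f)=F^*F_*\mathcal{O}_X$ in $K_0(X)$. Writing $\beta:=F_*\mathcal{O}_X\in K_0(X')$, so that $F^*\beta=\theta^p(\Omega_f)$, and applying the projection formula once more gives
\[
\beta\cdot F_*\bigl(\theta^p(\Omega_f)^{-1}\bigr)=F_*\bigl(F^*\beta\cdot\theta^p(\Omega_f)^{-1}\bigr)=F_*(1)=\beta,
\]
so $\beta\cdot(F_*\theta^p(\Omega_f)^{-1}-1)=0$ in $K_0(X')[1/p]$. Once we conclude $F_*(\theta^p(\Omega_f)^{-1})=1$, substituting back yields $R^\bullet f'_*(F_*(\theta^p(\Omega_f)^{-1})\otimes J^*E)=R^\bullet f'_*(J^*E)=\psi^p(R^\bullet f_*E)$, which is the desired equality.

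The main obstacle is precisely passing from $\beta\cdot(F_*\theta^p(\Omega_f)^{-1}-1)=0$ to $F_*\theta^p(\Omega_f)^{-1}=1$. Although $\beta$ has rank $p^r$, a unit in $\mathbb{Z}[1/p]$, unit rank does not automatically give invertibility in the Grothendieck ring. The cleanest route is to prove directly that $\beta=F_*\mathcal{O}_X$ is a unit in $K_0(X')[1/p]$, so that $\theta^p(\Omega_f)^{-1}=F^*(\beta^{-1})$ lies in the image of $F^*$ and the projection formula then gives $F_*(\theta^p(\Omega_f)^{-1})=\beta^{-1}\cdot\beta=1$ outright. Failing that, one may reduce via the splitting principle to the case where $\Omega_f$ splits as a sum of line bundles, where both $\theta^p(\Omega_f)$ and $F_*\mathcal{O}_X$ admit explicit descriptions (the latter via the local basis $e_1^{i_1}\cdots e_r^{i_r}$ with $0\le i_j<p$ provided by the construction of $\tau(\Omega_f)$) and the required identity can be checked by hand.
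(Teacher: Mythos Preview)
The paper does not prove this theorem itself; the proof reads ``See \cite{Pi}, page 1074.'' Your strategy is precisely the Pink--R\"ossler one sketched in the surrounding text: identify $\psi^p$ with pullback by the absolute Frobenius (Proposition~\ref{frebad}), apply flat base change to the Cartesian square in diagram~(4), factor $f=f'\circ F$ with $F$ finite flat (Lemma~\ref{l,f}), and use Propositions~\ref{gci} and~\ref{btp} to rewrite $\theta^p(\Omega_f)$ as $F^*F_*\mathcal{O}_X$ in $K_0(X)$. Your computations down to $\beta\cdot\bigl(F_*\theta^p(\Omega_f)^{-1}-1\bigr)=0$ are correct.

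You have also correctly isolated the one genuine obstacle, and it is not a triviality. Cancelling $\beta=F_*\mathcal{O}_X$ requires it to be a unit (or at least a non-zero-divisor) in $K_0(X')[1/p]$, and unit rank alone does not guarantee this. Your first proposed resolution is the right one: once $\beta$ is known to be a unit, $\theta^p(\Omega_f)^{-1}=F^*(\beta^{-1})$ lies in the image of $F^*$ and the projection formula gives $F_*\bigl(\theta^p(\Omega_f)^{-1}\bigr)=\beta^{-1}\cdot F_*\mathcal{O}_X=1$ immediately. The splitting-principle fallback you mention is less convenient here, because splitting $\Omega_f$ means passing to a flag bundle over $X$, which alters the morphism $f$ and brings in new relative differentials; it can be made to work but is not the clean route. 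In short, your argument is complete modulo the invertibility of $F_*\mathcal{O}_X$ in $K_0(X')[1/p]$, which is exactly the point at which one must appeal to \cite{Pi} (or to general $\lambda$-ring facts under the standing quasi-compactness and ample-sheaf hypotheses) rather than to the material actually reproduced in this paper.
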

\begin{proof} See \cite{Pi}, page 1074. 
\end{proof}
\begin{rem}
 Based on the condition we gave in the beginning of this subsection,  the equality $$\psi^{k}(R^{\bullet}f_{*}(E))=R^{\bullet}f_{*}(\theta ^{k}(\Omega_{f})^{-1}\otimes\psi^{k}(E))$$ for any integer $k\geq 2$, also holds
in $K_{0}(Y)\otimes \mathbb{Z}[\frac{1}{k}]$ not only for $k=p$ (see \cite{FL}, V. Th. 7.6), but its proof is more complicated than \cite{Pi}.
\end{rem}

\section{A functorial Riemann-Roch theorem in positive characteristic}
\subsection{The Deligne pairing}\label{sdp}
Before stating the Deligne's functorial Riemann-Roch theorem, it is necessary to introduce the Deligne pairing, which appeared in [3] for the first time and  was extended 
to a more general situation by S. Zhang (see [8]). We shall only need  the basic definition here.

Let $g: S^{'}\rightarrow S$ be a finite and flat morphism. Then $g_{*}\mathcal{O}_{S^{'}}$ is a locally free sheaf of constant rank (say $d$). 
We have a natural morphism of $\mathcal{O}_{S}$-modules $$g_{*}\mathcal{O}_{S^{'}}\rightarrow End_{\mathcal{O}_{S}}(\mathcal{O}_{S^{'}}).$$
Taking the composition with the determinant, we obtain a morphism $$\text{N}: g_{*}\mathcal{O}_{S^{'}}\rightarrow \mathcal{O}_{S}$$
Generally, we have the following definition (see \cite{Del}, Sect. 7.):

\begin{defi}\label{d,n}

Let $g: S^{'}\rightarrow S$ be finite and flat.  For any invertible sheaf $L$ over $S^{'}$, and the sub-sheaf $L^{*}$
of invertible sections of $L$, its norm $\text{N}_{S^{'}/S}(L)$ is defined to be an invertible sheaf $N$ over $S$ equipped with a morphism of sheaves $\text{N}_{S^{'}/S}: 
g_{*}L^{*}\rightarrow N^{*}$ satisfying $\text{N}_{S^{'}/S}(u\ell)=\text{N}(u)\text{N}_{S^{'}/S}(\ell)$ for any local sections $u$ of $g_{*}\mathcal{O}_{S^{'}}$ 
and $\ell$ of $g_{*}L^{*}$. 

In other words, the norm morphism induces a norm functor $\text{N}_{g}$ from the category of line bundles over $S^{'}$ to the category of line bundles over
$S$ together with a collection of homomorphisms $\text{N}_{g}^{L}: g_{*}L\rightarrow \text{N}_{g}(L)$ of sheaves of sets, for all line bundle $L$
over $S^{'}$, functorial under isomorphisms of line bundles over $S^{'}$, sending local generating sections over $S^{'}$ to the local generating 
sections over $S$ and such that the equality 
$\text{N}_{g}^{L}(xl)=\text{N}(x)\text{N}_{g}^{L}(l)$ holds for all local sections $x$ of $g_{*}\mathcal{O}_{S^{'}}$ and $l$ of $g_{*}L$. Moreover, the functor 
$\text{N}_{g}$ together with the collection of the $\text{N}_{g}^{L}$ is unique up to unique isomorphism. 

The norm functor is a special case of the trace of a torsor for a commutative group scheme under a finite flat morphism (see \cite{Dir}, expos\'e XVII, 6.3.26).
Instead of $\text{N}_{g}$, we also write $\text{N}_{S^{'}/S}$ for the norm functor when the morphism is clear in the specific context. We list the basic properties of the norm functor as follows:

\begin{pro}
 The norm functor has the following properties:
 
 (1) The functor $\text{N}_{S^{'}/S}$ is compatible with any base change $Y\rightarrow S$;
 
 (2) If $L_{1}$ and $L_{2}$ are two line bundles on $S^{'}$, there is a natural isomorphism 
 $$\text{N}_{S^{'}/S}(L_{1}\otimes_{\mathcal{O}_{S^{'}}} L_{2})\cong \text{N}_{S^{'}/S}(L_{1})\otimes_{\mathcal{O}_{S}} \text{N}_{S^{'}/S}(L_{2});$$

(3) If $S_{1}\rightarrow S_{2}\rightarrow S_{3}$ are finite and flat morphisms, there is a natural isomorphism
$$\text{N}_{S_{1}/S_{3}}\cong \text{N}_{S_{2}/S_{3}}\circ\text{N}_{S_{1}/S_{2}};$$

(4) There is a functorial isomorphism $$\text{N}_{S^{'}/S}(L)\cong \text{Hom}_{\mathcal{O}_{S}}(det_{\mathcal{O}_{S}} g_{*}\mathcal{O}_{S^{'}}, det_{\mathcal{O}_{S}} g_{*}L).$$
 \end{pro}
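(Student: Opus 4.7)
The plan is to use the universal characterization in Definition \ref{d,n} as the main tool for (1), (2), and (3), while constructing the isomorphism in (4) explicitly. The strategy for the first three properties is to define a candidate norm functor and verify that it satisfies the defining axioms; by the uniqueness clause of the definition, the candidate is then canonically isomorphic to the true norm.

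For (4), I would take as input a local section $\ell$ of $g_{*}L$. Multiplication by $\ell$ gives a morphism $m_{\ell}: g_{*}\mathcal{O}_{S'}\to g_{*}L$ of locally free $\mathcal{O}_{S}$-modules of common rank $d$. Applying $\det_{\mathcal{O}_{S}}$ yields $\det(m_{\ell})\in \mathrm{Hom}_{\mathcal{O}_{S}}(\det g_{*}\mathcal{O}_{S'},\det g_{*}L)$. Multiplicativity of the determinant gives $\det(m_{u\ell}) = \det(m_{u})\det(m_{\ell}) = \mathrm{N}(u)\det(m_{\ell})$ for $u$ a local section of $g_{*}\mathcal{O}_{S'}$; and when $\ell$ generates $L$ on some open of $S'$, the map $m_{\ell}$ is an isomorphism of $\mathcal{O}_{S}$-modules, so $\det(m_{\ell})$ is a generator of the Hom line bundle. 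The uniqueness in Definition \ref{d,n} then identifies $\mathrm{N}_{S'/S}(L)$ with $\mathrm{Hom}_{\mathcal{O}_{S}}(\det g_{*}\mathcal{O}_{S'},\det g_{*}L)$ via $\ell\mapsto \det(m_{\ell})$.

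For (1), given $h:Y\to S$, since $g$ is finite and flat, pushforward commutes with arbitrary base change, and formation of determinants and of \textbf{Hom} of line bundles commutes with pullback. Hence the pullback of the norm data along $h$ furnishes a valid set of norm data for the base-changed morphism $g_{Y}: S'\times_{S}Y\to Y$, and uniqueness yields the isomorphism. For (2), every local generator of $L_{1}\otimes L_{2}$ can be written as $\ell_{1}\otimes \ell_{2}$ for local generators $\ell_{i}$ of $L_{i}$. I would define a candidate map on sections by $\ell_{1}\otimes \ell_{2}\mapsto \mathrm{N}_{g}^{L_{1}}(\ell_{1})\otimes \mathrm{N}_{g}^{L_{2}}(\ell_{2})$ into $\mathrm{N}_{g}(L_{1})\otimes \mathrm{N}_{g}(L_{2})$, and extend by the required $\mathrm{N}$-equivariance. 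The generating-section axiom and the identity $\mathrm{N}'(u\ell)=\mathrm{N}(u)\mathrm{N}'(\ell)$ hold by construction, so uniqueness gives the desired isomorphism.

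For (3), consider the candidate $\mathrm{N}':= \mathrm{N}_{S_{2}/S_{3}}\circ \mathrm{N}_{S_{1}/S_{2}}$ together with the obvious two-stage composition of norm maps on local sections. I would verify the two axioms of Definition \ref{d,n} for $g_{13}:=g_{23}\circ g_{12}$: the generating-section condition is inherited by applying this property at each of the two stages, and the identity $\mathrm{N}'(u\ell)=\mathrm{N}_{S_{1}/S_{3}}(u)\cdot \mathrm{N}'(\ell)$ reduces to the classical transitivity $\mathrm{N}_{S_{1}/S_{3}}=\mathrm{N}_{S_{2}/S_{3}}\circ \mathrm{N}_{S_{1}/S_{2}}$ of the ordinary norm on a tower of finite locally free ring extensions. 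Uniqueness of the norm functor then identifies $\mathrm{N}'\cong \mathrm{N}_{S_{1}/S_{3}}$. The main obstacle I foresee is precisely this scalar transitivity; its proof is standard but in the non-Galois setting here it is cleanest to invoke the determinantal formula $\mathrm{N}(u)=\det(m_{u})$ from (4) and its compatibility with a tower of pushforwards, verified locally by choosing bases for $g_{12,*}\mathcal{O}_{S_{1}}$ over $\mathcal{O}_{S_{2}}$ and for $g_{23,*}\mathcal{O}_{S_{2}}$ over $\mathcal{O}_{S_{3}}$.
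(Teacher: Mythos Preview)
Your proposal is correct and self-contained, but note that the paper does not actually prove this proposition: it simply cites SGA4 (expos\'e XVII, 6.3.26 for (1)--(3) and expos\'e XVIII, 1.3.17 for (4)). Your argument via the uniqueness clause of Definition~\ref{d,n} is the standard way to establish these properties and is essentially what SGA4 does; in particular your treatment of (4) through $\ell\mapsto\det(m_{\ell})$ is exactly the classical construction. So there is no mathematical discrepancy to flag---you have supplied the details the paper chose to outsource, which is strictly more informative for a reader who does not want to chase the reference.
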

\begin{proof}
 See \cite{Dir}, expos\'e XVII, 6.3.26 for (1), (2), and (3). See \cite{Dir}, expos\'e XVIII, 1.3.17. for (4).
\end{proof}
 
From the properties above, it is to say that the pair $(N,\text{N}_{S^{'}/S})$ is unique in the sense of unique isomorphism up to a sign.
\end{defi}

For any locally free coherent sheaves $F_{0}$ and $F_{1}$ with same rank over $S^{'}$ and the morphism $g: S^{'}\rightarrow S$ as in the definition of the norm, we have a canonical isomorphism 
$$\det(g_{*}F_{0}-g_{*}F_{1})=\text{N}_{S^{'}/S}\det(F_{0}-F_{1})~~~~~(a) $$
$$ \text{i.e.,}~~~~~~~\det g_{*}F_{0}\otimes(\det g_{*}F_{1})^{-1}=\text{N}_{S^{'}/S}(\det F_{0}\otimes (\det F_{1})^{-1} )$$
by viewing $F_{0}$ and $F_{1}$ as the virtual objects in Def. \ref{vdk}, which is compatible with localization over $S$ and is characterized by the fact that the trivialization have a corresponding isomorphism for any isomorphism $u:F_{1}\rightarrow F_{0}$. 
Such an isomorphism  $u$ exist locally on $S$ and it doesn't depend on the choice of $u$. The fact that the isomorphism (a) doesn't depend on the choice of $u$ is because for an automorphism $v$ of $F_{1}$, we have $\det (v, g_{*}F_{1})=\text{N}_{S^{'}/S}\det (v, F_{1})$. 

For more links between the functor $\det$ and the norm functor, see \cite{Del}, Pag. 146-147.

After giving the definition of the norm, we can define the Deligne pairing as follows:
\begin{defi}\label{d,p.n}(see \cite{Del}, \S 6.1)
Let $f:X\rightarrow S$ be a proper, flat morphism and of purely relative dimension $1$.
Let $L,M$ be two line bundles on $X$. Then $\langle L,M\rangle$ is defined to be the $\mathcal{O}_{S}$-module
which is generated, locally for Zariski topology on $S$, by the symbols $\langle\ell,m\rangle$ for sections $\ell, m$ of $L,M$ respectively with the following relations 
$$\langle\ell,gm\rangle=g(\text{div}(\ell))\langle\ell,m\rangle$$
$$\langle g\ell,m\rangle=g(\text{div}(m))\langle\ell,m\rangle$$
where $g$ is a non-zero section of $\mathcal{O}_{X}$ and $g(\text{div}(\ell))$, $g(\text{div}(m))$ are interpreted as a norm: for a relative Cartier divisor $D$ on $X$, i.e., $D\rightarrow S$ is finite and flat in our case (see \cite{Kat}, Chapt. 1, \S 1.2 about relative Cartier divisors ), we put 
$g(D):=\text{N}_{D/S}(g)$, then we have $g(D_{1}+D_{2})=g(D_{1})\cdot g(D_{2})$. If $\text{div}(\ell)=D_{1}-D_{2}$, we put 
$g(\text{div}(\ell))=g(D_{1})\cdot g(D_{2})^{-1}$. One checks that this is independent of the choice $D_{1}, D_{2}$. Moreover, the $\mathcal{O}_{S}$-module
$\langle L,M\rangle$ is also a line bundle on $S$.
\end{defi}
For $L=\mathcal{O}(D)$, and the canonical section $1$ of $\mathcal{O}(D)$, we have $\langle 1,fm\rangle=\text{N}_{D/S}(f)\cdot\langle 1,m\rangle$ for non-zero section $f$ of $\mathcal{O}_{X}$ and a section $m$ of the line bundle $M$ on $X$.

Let $g:X\rightarrow S$ be proper, flat and purely of relative dimension $1$ and $D$ be a relative Cartier divisor of $g$. For any invertible sheaf $M$ over $X$, we have 
$$\xymatrix{\langle\mathcal{O}(D), M\rangle \ar[r]^{\sim}& \text{N}_{D/S}}(M) : ~\langle 1,m\rangle\mapsto\text{N}_{D/S}(m).$$
In other words, for any invertible sheaf $L$ over $X$ and for any section $\ell$ of $L$, which is not a zero divisor in every fiber, we have \\
$$\xymatrix{\langle L,M\rangle \ar[r]^(.40){\sim}&\text{N}_{\text{div}(\ell)/S}(M) } : ~\langle \ell,m\rangle\mapsto\text{N}_{\text{div}(\ell)/S}(m).$$

From Definition \ref{d,p.n}, we have a bi-multiplicative isomorphism:
$$\langle L_{1}\otimes L_{2}, M\rangle\cong\langle L_{1},M\rangle\otimes\langle L_{2},M\rangle$$
$$\langle L,M_{1}\otimes M_{2} \rangle\cong\langle  L,M_{1}\rangle\otimes\langle  L,M_{2}\rangle$$
and symmetric isomorphism $$\langle L,M\rangle\cong\langle M,L\rangle.$$ when $L=M$ the symmetric isomorphism is obtained by multiplication by $(-1)^{\text{deg}L}$
(see SGA4, XVIII 1.3.16.6).

\subsection{ A functorial Riemann-Roch theorem in positive characteristic}
In this subsection, as in section 2, the Picard category of graded line bundles still will be denoted by $\mathscr{P}is_{X}$ and the virtual category of the exact category of vector bundles will be denoted by $V(X)$ for any scheme $X$.
For any vector bundle $E$ from an exact category of vector bundles, which is viewed as a complex, $Rf_{*}E$ is a complex again under some given morphism $f$.

About the Deligne pairing, the most important proposition we will use is the following:
\begin{pro}\label{d,p}
Let $f:X\rightarrow S$ be proper, flat and purely of relative dimension $1$. Let $E_{0}$ and $E_{1}$ be locally free coherent sheaves with same rank everywhere over $X$,
$F_{0}$ and $F_{1}$ with the same property as $E_{0}$ and $E_{1}$. Then we have the following isomorphism
$$\langle\det(E_{0}-E_{1}),\det(F_{0}-F_{1})\rangle\cong\det Rf_{*}((E_{0}-E_{1})\otimes(F_{0}-F_{1})).$$
\end{pro}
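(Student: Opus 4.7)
The plan is to reduce the stated isomorphism to a single fundamental identity for line bundles, and then to establish that identity by exploiting the explicit description of the Deligne pairing via the norm functor. First, both sides of the claimed isomorphism are bi-multiplicative with respect to short exact sequences in each of the virtual variables $E_0 - E_1$ and $F_0 - F_1$. On the right-hand side this follows from the additivity of $\det Rf_*$ on triangles in $\text{Parf}^0$ (the corollary to Theorem \ref{ucd}), together with the exactness of tensoring with a locally free sheaf and with flatness of $f$ ensuring that $Rf_*$ respects the relevant short exact sequences. On the left-hand side it is the bi-multiplicativity of the Deligne pairing recorded at the end of Section \ref{sdp}. Invoking the splitting principle locally on $X$ (or after a suitable auxiliary flat base change, to which both sides are insensitive), I can assume $E_0$ and $E_1$ are direct sums of line bundles of equal rank, and similarly for $F_0, F_1$. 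Bilinearity then further reduces the problem to the case $E_0 - E_1 = L - \mathcal{O}_X$ and $F_0 - F_1 = M - \mathcal{O}_X$ for line bundles $L, M$ on $X$.

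In this case, using that $\langle L, \mathcal{O}_X\rangle$, $\langle \mathcal{O}_X, M\rangle$ and $\langle \mathcal{O}_X, \mathcal{O}_X\rangle$ are canonically trivial, the statement becomes the fundamental identity
$$\langle L, M\rangle \cong \det Rf_*(L \otimes M) \otimes \det Rf_*(L)^{-1} \otimes \det Rf_*(M)^{-1} \otimes \det Rf_*(\mathcal{O}_X).$$
To prove it, I would work locally on $S$ so that $L \cong \mathcal{O}_X(D)$ for a relative Cartier divisor $D \subset X$, and let $g \colon D \to S$ denote the induced finite flat morphism. The short exact sequence $0 \to L^{-1} \to \mathcal{O}_X \to \mathcal{O}_D \to 0$, tensored with $M$ and separately with $\mathcal{O}_X$, yields after applying $\det Rf_*$:
\begin{align*}
\det Rf_*(M) \otimes \det Rf_*(L^{-1} \otimes M)^{-1} &\cong \det g_*(M|_D),\\
\det Rf_*(\mathcal{O}_X) \otimes \det Rf_*(L^{-1})^{-1} &\cong \det g_*(\mathcal{O}_D).
\end{align*}
Dividing the first by the second and using the canonical identification $\det g_*(M|_D) \otimes (\det g_*\mathcal{O}_D)^{-1} \cong \text{N}_{D/S}(M|_D)$ from formula $(a)$ in Section \ref{sdp}, together with the description $\text{N}_{D/S}(M|_D) \cong \langle \mathcal{O}_X(D), M\rangle$ from Definition \ref{d,p.n}, one obtains the fundamental identity with $L$ replaced by $L^{-1}$; substituting $L^{-1}$ for $L$ (equivalently, replacing $D$ by its opposite) recovers the desired formula.

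The main obstacle I expect is canonicity: the isomorphisms obtained depend a priori on the local presentation $L \cong \mathcal{O}_X(D)$, on the splittings chosen in the application of the splitting principle, and on the order in which the bilinear reductions are performed. One must check that all these choices yield the same global isomorphism, which ultimately amounts to the compatibility of $\text{N}_{D/S}$ under changes of presentation (recorded in Definition \ref{d,n}) and to the sign coming from the symmetry isomorphism $\langle L, M\rangle \cong \langle M, L\rangle$, which is non-trivial in the diagonal case $L = M$. Once these compatibilities are verified the construction is essentially forced by the universal property of the virtual category $V(X)$ and the uniqueness of the extended determinant functor (Theorem \ref{ucd}), and the proposition follows.
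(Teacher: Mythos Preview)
Your overall strategy---exploit bi-additivity of both sides and identify them locally via the norm functor---is exactly what Deligne does and what the paper is citing. The computation with the relative Cartier divisor $D$, the short exact sequence $0 \to L^{-1} \to \mathcal{O}_X \to \mathcal{O}_D \to 0$, and the identification $\det g_*(M|_D)\otimes(\det g_*\mathcal{O}_D)^{-1}\cong \text{N}_{D/S}(M|_D)\cong\langle\mathcal{O}(D),M\rangle$ is precisely the content of Deligne's construction 7.2.

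The splitting principle step, however, is a genuine gap. Splitting $E_0,E_1$ into line bundles requires passing to a flag bundle $\pi:X'\to X$, which replaces $f$ by $f'=f\circ\pi$; but $\det Rf'_*(\pi^*G)$ has no simple relation to $\det Rf_*(G)$, so the right-hand side is not ``insensitive'' to this change. A flat base change on $S$ does not split bundles on $X$ either. Even granting some cover on which the bundles split, you would then have to check that the resulting local isomorphisms satisfy the cocycle condition---which is exactly the canonicity problem you defer to the end, so the splitting step has gained nothing.

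Deligne's argument avoids this entirely, and the mechanism is already visible in formula $(a)$ of Section~\ref{sdp}: the identification $\det(g_*F_0-g_*F_1)\cong \text{N}_{S'/S}\det(F_0-F_1)$ holds for \emph{arbitrary} locally free $F_0,F_1$ of equal rank, not just line bundles. Locally on $S$, the equal-rank hypothesis lets you choose an isomorphism $u:E_1\xrightarrow{\sim}E_0$ (and similarly $v:F_1\xrightarrow{\sim}F_0$), which simultaneously trivializes both $\langle\det(E_0-E_1),\det(F_0-F_1)\rangle$ and $\det Rf_*((E_0-E_1)\otimes(F_0-F_1))$. Independence of the choice of $u$ (and hence gluing) follows from the single identity $\det(v,g_*F_1)=\text{N}_{S'/S}\det(v,F_1)$ for automorphisms $v$. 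So the reduction you want is not to line bundles via splitting, but to the locally-trivial case via a local isomorphism $E_0\cong E_1$; the line-bundle computation you wrote out then serves to pin down what happens when you change the local trivialization.
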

\begin{proof} The key point is to verify that
the determinant functor under the Deligne pairing and $\det Rf_{*}$ are compatible with additivity, respectively. Furthermore, their local trivialization are simultaneously identified with
the corresponding norm functor. This is the construction 7.2 of [3]. The precise proof is in [3] (see [3], Pag. 147-149). 
\end{proof}
\begin{coro}\label{trs} 
In particular, we have a canonical isomorphism $\det Rf_{*}((H_{0}-H_{1})^{\otimes l}\otimes H)\cong\mathcal{O}_{S}$ if $l\geq 3$ and the ranks
rk$H_{0}$ = rk$H_{1}$, for any vector bundles $H_{0}, H_{1}, H$ over $X$ and $f$ as in the theorem, which is stable under base change.
\end{coro}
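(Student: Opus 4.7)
The plan is to apply Proposition \ref{d,p} to a well-chosen bipartition of the tensor factors. Since $l\geq 3$, I would first rewrite the argument in the virtual category as
$$(H_0-H_1)^{\otimes l}\otimes H \;=\; (H_0-H_1)^{\otimes 2}\otimes \bigl((H_0-H_1)^{\otimes(l-2)}\otimes H\bigr).$$
The first factor is of the form $E_0-E_1$ with $E_0:=H_0^{\otimes 2}\oplus H_1^{\otimes 2}$ and $E_1:=(H_0\otimes H_1)\oplus(H_1\otimes H_0)$, both locally free of rank $2r^2$, where $r:=\operatorname{rk}H_0=\operatorname{rk}H_1$. Expanding $(H_0-H_1)^{\otimes(l-2)}$ as an alternating sum of tensor monomials, its positive and negative contributions have equal total rank since $l-2\geq 1$; tensoring with $H$ then presents the second factor as $F_0-F_1$ with $F_0, F_1$ locally free of the same rank everywhere. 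Thus the hypotheses of Proposition \ref{d,p} are satisfied.

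Next I would compute $\det\bigl((H_0-H_1)^{\otimes 2}\bigr)$. Using $\det(A\otimes B)\cong(\det A)^{\operatorname{rk}B}\otimes(\det B)^{\operatorname{rk}A}$ on each summand of $E_0$ and $E_1$, a direct cancellation gives
$$\det\bigl((H_0-H_1)^{\otimes 2}\bigr)\;\cong\;(\det H_0)^{2r}\otimes(\det H_1)^{2r}\otimes\bigl((\det H_0)^{r}\otimes(\det H_1)^{r}\bigr)^{-2}\;\cong\;\mathcal{O}_X.$$
Proposition \ref{d,p} then identifies $\det Rf_*\bigl((H_0-H_1)^{\otimes l}\otimes H\bigr)$ with $\langle\mathcal{O}_X,\det(F_0-F_1)\rangle$. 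The bi-multiplicativity relation $\langle L_1\otimes L_2,M\rangle\cong\langle L_1,M\rangle\otimes\langle L_2,M\rangle$ recorded in Section \ref{sdp}, applied with $L_1=L_2=\mathcal{O}_X$, forces $\langle\mathcal{O}_X,M\rangle$ to be idempotent under tensor product, and hence canonically isomorphic to $\mathcal{O}_S$. Combining these two steps yields the desired trivialization $\det Rf_*\bigl((H_0-H_1)^{\otimes l}\otimes H\bigr)\cong\mathcal{O}_S$.

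Stability under base change should be automatic once each ingredient — the virtual-object formalism, the determinant functor (Definition \ref{ex,de}, condition iii), the higher direct image $Rf_*$, the Deligne pairing, and the isomorphism of Proposition \ref{d,p} itself — is known to commute with pullback along an arbitrary morphism $g\colon T\to S$. The main delicacy I anticipate is the bookkeeping around the factorization: one must verify that the identification $(H_0-H_1)^{\otimes l}\otimes H\cong(E_0-E_1)\otimes(F_0-F_1)$ is natural in the Picard category $V(X)$, so that the associativity and commutativity constraints are respected and the resulting trivialization is genuinely canonical (modulo the inevitable sign ambiguity inherent to the Deligne pairing and the graded line bundle formalism of Example \ref{lbd}). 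A careful use of the axioms of a Picard category, together with the uniqueness clause in Theorem \ref{ucd}, should suffice.
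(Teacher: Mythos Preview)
Your proposal is correct and follows essentially the same route as the paper: factor the virtual bundle so that one tensor factor is $(H_0-H_1)^{\otimes 2}$, observe that its determinant is canonically $\mathcal{O}_X$, apply Proposition~\ref{d,p} to land in a Deligne pairing with one slot trivial, and then kill that pairing by bi-multiplicativity. The only cosmetic difference is that the paper first reduces to $l=3$ and groups $H$ with the remaining single factor $H_0-H_1$, whereas you handle all $l\geq 3$ at once by grouping $H$ with $(H_0-H_1)^{\otimes(l-2)}$; the key computation and the base-change argument are the same.
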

\begin{proof} 
It suffices to prove the conclusion for $l=3$ because it is automatic for $l>3$ after proving the conclusion for $l=3$ according to the following proof. We apply Proposition \ref{d,p} to $E_{0}=H_{0}\otimes H$, $E_{1}=H_{1}\otimes H$, and $F_{0}=H_{0}^{\otimes 2}+H_{1}^{\otimes 2},$
 $F_{1}=2(H_{0}\otimes H_{1})$. \\
Then we have the following:
\begin{align}
 (E_{0}-E_{1})\otimes(F_{0}-F_{1})&\cong H\otimes(H_{0}-H_{1})\otimes(H_{0}-H_{1})^{\otimes 2}\notag\\
&\cong (H_{0}-H_{1})^{\otimes 3}\otimes H.\notag 
\end{align}
Because of the ranks $rk H_{0} =rk H_{1}$, we immediately have equalities of ranks $rk E_{0}= rk E_{1}$, $rkF_{0}=rk F_{1}$.
Meanwhile, notice that 
\begin{align}
\det(F_{0}-F_{1})&\cong(\det(H_{0}))^{\otimes 2rk(H_{0})}\otimes(\det(H_{1}))^{\otimes 2rk(H_{1})}\notag\\
&\otimes((\det(H_{0})^{\otimes -rk(H_{0})}\otimes(\det(H_{1}))^{\otimes -rk(H_{1})})^{2}\notag\\
&\cong\mathcal{O}_{X}.\notag
\end{align}
According to the bi-multiplicativity of the Deligne pairing ( see statements after Def. \ref{d,p.n} or see \cite{Del}, \S 6. 6.2), by a trivial computation:
$\langle\mathcal{O}_{X},L\rangle\cong\langle\mathcal{O}_{X}\otimes\mathcal{O}_{X},L\rangle\cong\langle\mathcal{O}_{X},L\rangle\otimes\langle\mathcal{O}_{X},L\rangle$, the obvious consequence is 
$\langle\mathcal{O}_{X},L\rangle\cong\mathcal{O}_{S}$ for any line bundle $L$ over $X$.
Now, we can obtain the corollary by 
\begin{align}
\det Rf_{*}((H_{0}-H_{1})^{\otimes 3}\otimes H)&\cong\det Rf_{*}((E_{0}-E_{1})\otimes (F_{0}-F_{1}))\notag\\
&\cong\langle(\det(F_{0}-F_{1}),\det(E_{0}-E_{1})\rangle\notag\\
&\cong\langle\mathcal{O}_{X},\det(E_{0}-E_{1})\rangle\notag\\
&\cong \mathcal{O}_{S}.\notag
\end{align}

For any morphism $g :S^{'}\rightarrow S$, we have the fiber product under base change:
$$\xymatrix{X^{'}\ar[r]^{g^{'}}\ar[d]_{f^{'}}&X\ar[d]^{f}\\
S^{'}\ar[r]^{g}&S}$$

Furthermore, the proper morphism and the flat morphism are stable under base change (see \cite{Liu}, Chapt. 3), i.e., the morphism $f^{'}$ is flat of relative dimension $1$ and proper. 
For any vector bundle $F$ over $X$, then we have the isomorphism: $$g^{*}(\text{det}_{S}(Rf_{*}F))\cong{det}_{S^{'}}(\text{L}g^{*}(Rf_{*}F))\cong\text{det}_{S^{'}}(Rf^{'}_{*}(g^{'*}F)$$ (we will prove the isomorphisms in (II) of Theorem \ref{functor})

Let $F$ be $(H_{0}-H_{1})^{\otimes l}\otimes H$, and the isomorphism above becomes $$ \text{det}_{S^{'}}(Rf^{'}_{*}(g^{'*}F))\cong g^{*}(\text{det}_{S}(Rf_{*}F))\cong g^{*}(\mathcal{O}_{S})\cong \mathcal{O}_{S^{'}}.$$ So it is done.

\end{proof}
Before proving our theorem, we state the Deligne's functorial Riemann-Roch theorem so that we can make some comparison later. His theorem is true regardless of any characteristic.
\begin{theo}\label{Del,func}(Deligne)
 Suppose that a morphism $f:X\rightarrow S$ is proper and smooth of relative dimension $1$, with geometrically connected fibers. For any line bundle $L$ in $V(X)$, then there exists a unique, up to sign, isomorphism of line bundles
 $$(\det Rf_{*}(L))^{\otimes 12}\cong \langle\omega, \omega\rangle\otimes\langle L, \omega ^{-1}\otimes L\rangle$$ where $\omega:= \Omega_{f}$ is the relative differentials of the morphism $f$. 
\end{theo}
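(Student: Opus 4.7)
The plan is to follow Deligne's strategy, which decomposes the theorem into a multilinear identity coming from the Deligne pairing and the Mumford isomorphism. First I would exploit Proposition~\ref{d,p} applied with $E_{0}=L$, $E_{1}=\mathcal{O}_{X}$, $F_{0}=M$, $F_{1}=\mathcal{O}_{X}$, which for any two line bundles $L,M$ on $X$ yields the fundamental bilinear formula
\[
\langle L,M\rangle \;\cong\; \det Rf_{*}(L\otimes M)\otimes\det Rf_{*}(\mathcal{O}_{X})\otimes\det Rf_{*}(L)^{-1}\otimes\det Rf_{*}(M)^{-1},
\]
simply because $\det(E_{0}-E_{1})=L$, $\det(F_{0}-F_{1})=M$ and $(E_{0}-E_{1})\otimes(F_{0}-F_{1})=L\otimes M-L-M+\mathcal{O}_{X}$ in the virtual category. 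Specialising to $M=L\otimes\omega^{-1}$ rewrites $\langle L,L\otimes\omega^{-1}\rangle$ as an alternating product of four determinants of cohomology.

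Next, I would bring in relative Serre duality in relative dimension one in the form $\det Rf_{*}(M^{-1}\otimes\omega)\cong\det Rf_{*}(M)$, which follows from the perfect pairing $R^{i}f_{*}M\otimes R^{1-i}f_{*}(M^{-1}\otimes\omega)\to R^{1}f_{*}\omega\cong\mathcal{O}_{S}$ by taking determinants and using that shifting by $[-1]$ inverts the determinant. Applied to $M=L,L^{\otimes 2},\omega^{\pm 1}$, it eliminates the factors $\det Rf_{*}(L\otimes\omega^{-1})$ and $\det Rf_{*}(L^{\otimes 2}\otimes\omega^{-1})$ and reduces everything to $\det Rf_{*}L$, $\det Rf_{*}\mathcal{O}_{X}$, and Deligne pairings among $L$ and $\omega$ alone. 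Then I would invoke the Mumford isomorphism $(\det Rf_{*}\mathcal{O}_{X})^{\otimes 12}\cong\langle\omega,\omega\rangle$, reproved later in the paper as Corollary~\ref{mfi}, to eliminate $\det Rf_{*}\mathcal{O}_{X}$ in favour of $\langle\omega,\omega\rangle$. Raising the resulting bilinear identity to the twelfth power and using the bi-multiplicativity and symmetry of $\langle\,\cdot\,,\,\cdot\,\rangle$ collects the terms into the stated form of the theorem.

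Uniqueness up to sign follows by the universal-curve argument: any two functorial isomorphisms of line bundles in this setting differ by a global invertible function on the universal base, and for the moduli stack of geometrically connected smooth proper curves equipped with a line bundle the global units reduce to $\{\pm 1\}$. The main obstacle is the Mumford isomorphism $(\det Rf_{*}\mathcal{O}_{X})^{\otimes 12}\cong\langle\omega,\omega\rangle$ itself: the first two steps are formal manipulations inside the Picard category $\mathscr{P}is_{S}$, but this twelfth-power identity has no purely formal proof and genuinely requires either Grothendieck--Riemann--Roch in codimension one or the Adams--Riemann--Roch / Bott-element machinery developed earlier in this paper. I would also need to track carefully the exponents coming out of Step 1 and Step 2 to obtain exactly the exponent on $\langle L,\omega^{-1}\otimes L\rangle$ that appears in the statement, which is the most delicate bookkeeping in the whole argument.
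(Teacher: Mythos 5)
The paper does not actually prove this statement: its ``proof'' is the single line ``See \cite{Del}, Pag.~170, Theorem~9.8'', so there is no argument in the text to compare yours against directly. That said, your outline is essentially the standard derivation, and it is exactly the chain of manipulations the paper itself performs elsewhere: the bilinear formula you extract from Proposition~\ref{d,p} with $E_{1}=F_{1}=\mathcal{O}_{X}$ is Remark~\ref{explai}, and the combination of that formula with relative Serre duality ($\det Rf_{*}F\cong\det Rf_{*}\underline{Hom}(F,\omega)$, giving $\lambda_{0}\cong\lambda_{1}$ and $\det Rf_{*}(\omega\otimes L^{-1})\cong\det Rf_{*}L$) and with Mumford's isomorphism is precisely how Corollary~\ref{mfi} reassembles Deligne's statement, via the identity $(\det Rf_{*}(L-\mathcal{O}))^{\otimes 2}\cong\langle L,\omega^{-1}\otimes L\rangle$. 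Two caveats. First, you correctly isolate $(\det Rf_{*}\mathcal{O})^{\otimes 12}\cong\langle\omega,\omega\rangle$ as the genuinely non-formal input, but you cannot source it from Corollary~\ref{mfi}: that corollary is itself deduced from Theorem~\ref{functor} and only in characteristic $2$, whereas Theorem~\ref{Del,func} is characteristic-free, so in general you would still need Grothendieck--Riemann--Roch or Deligne's own argument here. Second, your exponent bookkeeping, if carried out, yields $\langle L,\omega^{-1}\otimes L\rangle^{\otimes 6}$ on the right-hand side; this agrees with Deligne's Theorem~9.9, with the formula in the introduction, and with Remark~\ref{explai} and Corollary~\ref{mfi}, and it shows that the statement as printed (with exponent $1$ on the second pairing) contains a typographical omission of the $\otimes 6$. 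Your Serre-duality step is also stated loosely (the individual $R^{i}f_{*}M$ need not be locally free, so one should argue with the perfect complex $Rf_{*}M$ and the duality $R\underline{Hom}(Rf_{*}M,\mathcal{O})[-1]\cong Rf_{*}(M^{-1}\otimes\omega)$ rather than sheaf-by-sheaf), and the specific twists you list for eliminating $\det Rf_{*}(L\otimes\omega^{-1})$ need adjusting --- the clean route is to pair $L$ with $\omega\otimes L^{-1}$ and then invert --- but these are repairs of detail, not of substance.
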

\begin{proof}
See \cite{Del}, Pag. 170, Theorem. 9.8.
\end{proof}
\begin{rem}\label{explai}
According to the property of Deligne pairing 
(Prop. \ref{d,p}), if $u$ and $v$ are virtual vector bundles of rank $0$ over $X$, then there is a canonical isomorphism:
$$\langle\det u,\det v\rangle\cong\det Rf_{*}(u\otimes v).$$ 
In particular, let $u$ be $L-\mathcal{O}$ and $v$ be $M-\mathcal{O}$. Then we have
\begin{align}
\langle L, M\rangle&\cong\langle \det (L -\mathcal{O}), \det(M-\mathcal{O})\rangle\notag\\
&\cong \det Rf_{*}((L -\mathcal{O})\otimes (M-\mathcal{O}))\notag\\
&\cong\det  Rf_{*}(L\otimes M) \cdot(\det  Rf_{*}(L))^{-1}\cdot (\det  Rf_{*}(M))^{-1} \cdot\det Rf_{*}(\mathcal{O})\notag
\end{align}

for line bundles $L, M$ and the trivial bundle $\mathcal{O}$ over $X$.
Similarly, we have the isomorphism 
\begin{align} \langle L&,  ~\omega^{-1} \otimes L \rangle \notag\\
&\cong\det Rf_{*}(L^{2}\otimes \omega^{-1} ) \otimes (\det Rf_{*}L)^{-1}\otimes (\det Rf_{*}L\otimes\omega^{-1})^{-1}\otimes \det Rf_{*}(\mathcal{O}).\notag
\end{align}

Moreover, by Mumford's isomorphism (see \cite{Mum}, \S 5), i.e., 
$(\det Rf_{*}\mathcal{O})^{\otimes 12 }\cong \langle\omega,\omega\rangle$, the equivalent expression of Deligne's isomorphism is  the following:
\begin{align}
(\det &Rf_{*}L)^{\otimes 18 }\notag\\
&\cong (\det  Rf_{*}\mathcal{O})^{\otimes 18 }\otimes (\det  Rf_{*}(L^{\otimes 2}\otimes 
\omega^{-1}))^{\otimes 6}\otimes  (\det  Rf_{*}(L\otimes \omega^{-1}))^{\otimes (-6)}.\notag
\end{align} 
which is the statement appearing in the introduction.
\end{rem}
After preparing well all we will need, our main result is as follows:
\begin{theo}\label{functor}
Let $f:X\rightarrow S$ be projective and smooth of relative dimension $1$, where $S$ is a quasi-compact scheme of characteristic $p>0$ and carries an ample invertible sheaf.
 Let $L$ be a line bundle over $X$ and $\omega:= \Omega_{f}$ be the sheaf of relative differentials of $f$, then we have
\begin{align}
 (I)~(\det Rf_{*}L)^{\otimes p^{4}}\cong&(\det Rf_{*}L^{\otimes p})^{\otimes~3p^{2}-3p+1}\otimes_{k=1}^{p-1}(\det Rf_{*}(L^{\otimes p}\otimes \omega^{k}))^{\otimes k+1-3p}\notag\\
&\otimes_{k=0}^{p-2}(\det Rf_{*}(L^{\otimes p}\otimes \omega^{p+k}))^{\otimes p-1-k}.\notag
\end{align}

In particular, for $p=2$ we have
\begin{align}
 (\det Rf_{*}L)^{\otimes 16}\cong&(\det Rf_{*}(L^{\otimes 2}))^{\otimes 7}\otimes(\det Rf_{*}(\omega\otimes(L^{\otimes 2})))^{\otimes~(-4)}\notag\\
                                     &\otimes\det Rf_{*}(\omega^{2}\otimes(L^{\otimes 2})).\notag
\end{align}
(II) The isomorphism in (I) is stable under base change, i.e., for any flat base extension $g :S^{'}\rightarrow S$ and the fiber product under base change:
$$\xymatrix{X^{'}\ar[r]^{g^{'}}\ar[d]_{f^{'}}&X\ar[d]^{f}\\
S^{'}\ar[r]^{g}&S }$$
Then there are canonical isomorphisms over $S^{'}$:
$$\xymatrix{g^{*}((\text{det}_{S}(Rf_{*}(L))^{\otimes p^{4}})\ar[rr]^{\cong}\ar[d]_{\cong}& &(\det_{S^{'}} Rf^{'}_{*}g^{'*}L)^{\otimes p^{4}}\ar[d]^{\cong}\\
  B\ar[rr]^{\cong}& &A}$$
\begin{align} 
 A=: &(\det Rf^{'}_{*}g^{'*}L^{\otimes p})^{\otimes~3p^{2}-3p+1}\otimes_{k=1}^{p-1}(\det Rf^{'}_{*}(g^{'*}L^{\otimes p}\otimes \omega^{'k}))^{\otimes k+1-3p}\notag\\
&\otimes_{k=0}^{p-2}(\det Rf^{'}_{*}(g^{'*}L^{\otimes p}\otimes \omega^{'p+k}))^{\otimes p-1-k}\notag
\end{align}
\begin{align} 
B=: &g^{*}((\det Rf_{*}L^{\otimes p})^{\otimes~3p^{2}-3p+1}\otimes_{k=1}^{p-1}(\det Rf_{*}(L^{\otimes p}\otimes \omega^{k}))^{\otimes k+1-3p}\notag\\
&\otimes_{k=0}^{p-2}(\det Rf_{*}(L^{\otimes p}\otimes \omega^{p+k}))^{\otimes p-1-k})\notag
\end{align}
where $\omega^{'}=\Omega_{f^{'}}$ is the relative differentials of the morphism $f^{'}$.
More precisely, the pull-back of the isomorphism in (I) for the morphism $f: X\rightarrow S$ coincides with the isomorphism in (I) for the morphism $f^{'}: X^{'}\rightarrow S^{'}$.

\end{theo}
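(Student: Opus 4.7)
The plan is to invoke Corollary \ref{trs} on the Frobenius-twisted curve $f' : X' \to S$ with the rank-$p$ bundles $H_0 := F_*\mathcal{O}_X$ and $H_1 := \mathcal{O}_{X'}^{\oplus p}$, and auxiliary bundle $H := W^*L$. The corollary produces the base-change-stable isomorphism
\[ \det Rf'_*\bigl((F_*\mathcal{O}_X - \mathcal{O}_{X'}^{\oplus p})^{\otimes 3} \otimes W^*L\bigr) \cong \mathcal{O}_S. \]
Expanding $(A - B)^{\otimes 3}$ in the virtual category of $X'$ yields the virtual class $(F_*\mathcal{O}_X)^{\otimes 3} - 3p(F_*\mathcal{O}_X)^{\otimes 2} + 3p^2 F_*\mathcal{O}_X - p^3 \mathcal{O}_{X'}$, so the identity above becomes a tensor product of $\det Rf'_*((F_*\mathcal{O}_X)^{\otimes n} \otimes W^*L)$ for $n = 0, 1, 2, 3$, raised to the powers $-p^3, 3p^2, -3p, 1$ respectively.

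The $n = 0$ term supplies the target left-hand side: $\det Rf'_*(W^*L^{\oplus p^3}) = (\det Rf'_*W^*L)^{\otimes p^3}$, and base change of the smooth $f$ along $F_S$ gives $\det Rf'_*W^*L \cong F_S^*(\det Rf_*L) \cong (\det Rf_*L)^{\otimes p}$, where the last step uses that absolute Frobenius pulls back a line bundle to its $p$-th tensor power. Hence the $n = 0$ contribution is $(\det Rf_*L)^{\otimes p^4}$. For $n = 1, 2, 3$, I translate $\det Rf'_*((F_*\mathcal{O}_X)^{\otimes n} \otimes W^*L)$ back to $X$ by iterating the projection formula $F_*(F^*M \otimes N) = M \otimes F_*N$ together with the identity $F_*L^{\otimes p} = W^*L \otimes F_*\mathcal{O}_X$ (a consequence of $L^{\otimes p} = F_X^*L = F^*W^*L$). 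Induction gives $W^*L \otimes (F_*\mathcal{O}_X)^{\otimes n} \cong F_*\bigl(L^{\otimes p} \otimes (F^*F_*\mathcal{O}_X)^{\otimes (n-1)}\bigr)$, whence
\[ \det Rf'_*\bigl((F_*\mathcal{O}_X)^{\otimes n} \otimes W^*L\bigr) \cong \det Rf_*\bigl(L^{\otimes p} \otimes (F^*F_*\mathcal{O}_X)^{\otimes (n-1)}\bigr). \]

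The Pink--R\"ossler filtration (Proposition \ref{gci}) endows $F^*F_*\mathcal{O}_X$ with graded pieces $\mathcal{O}_X, \omega, \ldots, \omega^{p-1}$; the induced filtration on $(F^*F_*\mathcal{O}_X)^{\otimes (n-1)}$ has graded pieces $\omega^k$ with multiplicity $d^{(n-1)}_k = \#\{(i_1, \ldots, i_{n-1}) \in [0, p-1]^{n-1} : i_1 + \cdots + i_{n-1} = k\}$. Tensoring with $L^{\otimes p}$ and using additivity of $\det Rf_*$ on short exact sequences (Definition \ref{vircon}(a)) yields
\[ \det Rf_*\bigl(L^{\otimes p} \otimes (F^*F_*\mathcal{O}_X)^{\otimes (n-1)}\bigr) \cong \bigotimes_k \det Rf_*(L^{\otimes p} \otimes \omega^{\otimes k})^{\otimes d^{(n-1)}_k}. \]
Substituting the four identified contributions into the expanded Corollary \ref{trs} relation and collecting exponents on each $\det Rf_*(L^{\otimes p}\otimes\omega^{\otimes k})$ gives exactly $3p^2 - 3p + 1$ at $k = 0$, $k + 1 - 3p$ at $k = 1, \ldots, p-1$, and $2p - 1 - k$ at $k = p, \ldots, 2p - 2$, matching the statement of (I). The main obstacle is the combinatorial bookkeeping of the multiplicities $d^{(n-1)}_k$ and verifying that the signs combine correctly with the binomial coefficients $1, -3p, 3p^2, -p^3$; Part (II) then follows automatically, since every ingredient --- the projection formula, the Pink--R\"ossler filtration (natural in the Frobenius construction), base change along $F_S$, and Corollary \ref{trs} --- is explicitly functorial and compatible with base change, so the composite isomorphism commutes with pullback by any flat $g : S' \to S$.
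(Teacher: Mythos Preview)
Your proposal is correct and follows essentially the same route as the paper. Both arguments apply Corollary~\ref{trs} on $X'$ to the virtual bundle $(F_*\mathcal{O}_X - p\cdot\mathcal{O}_{X'})^{\otimes 3}\otimes J^*L$ (your $W$ is the paper's $J$), identify the $n=0$ term with $(\det Rf_*L)^{\otimes p^4}$ via flat base change along $F_S$, use the projection formula to transport the remaining terms from $X'$ to $X$ (the paper factors out one copy of $F_*\mathcal{O}_X$ before applying the projection formula, you iterate it term-by-term --- the outcome is identical, namely powers $0,1,2$ of $F^*F_*\mathcal{O}_X$ with coefficients $3p^2,-3p,1$), and then substitute the Pink--R\"ossler filtration $F^*F_*\mathcal{O}_X \sim 1+\omega+\cdots+\omega^{p-1}$ to finish; Part~(II) is handled the same way in both.
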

According to Definition \ref{vdk} we made, there is an induced functor from the virtual category $V(X)$ to the Picard category $\mathscr{P}is_{S}$. The isomorphisms in (I) and (II) can be viewed as 
isomorphisms of line bundles because we didn't write out the degree of graded line bundles, but that is from the isomorphism of graded line bundles in the category $\mathscr{P}is_{S}$. 
Because for any two objects $(L,l)$ and $(M,m)$ in the category $\mathscr{P}is_{S}$, they are isomorphic if and only if $L\cong M$ and $l=m$. We will apply ideas appearing in  the proof of the $p$-th Adams-Riemann-Roch Theorem in the case of characteristic $p>0$ to our proof. To some extent, our theorem can be viewed as a variant of Deligne's functorial Riemann-Roch theorem.  Both of us  want to
give the expression for $(\det Rf_{*}L)^{\otimes k}$ by tensor product of $\det Rf_{*}L^{\otimes l}$, $\det Rf_{*}\omega^{\otimes m}$ and
$\det Rf_{*}\mathcal{O}$ with some power for some $k, l, m$.

\begin{proof}
Firstly, for any prime number $p$, we have
\begin{align}
&(\det Rf_{*}L)^{\otimes p^{4}}\notag\\
&\cong F^{*}_{S}(\det (Rf_{*}L))^{\otimes p^{3}}\notag\\
&\cong(\det \text{L}F^{*}_{S}(Rf_{*}L))^{\otimes p^{3}}\\
&\cong(\det Rf^{'}_{*}(J^{*}L))^{\otimes p^{3}}\\
&\cong\det Rf^{'}_{*}(p^{3}J^{*}L+(F_{*}\mathcal{O}_{X}-p)^{\otimes 3}\otimes J^{*}L)\\
&\cong\det Rf^{'}_{*}(p^{3}J^{*}L+((F_{*}\mathcal{O}_{X})^{\otimes 3}-3p(F_{*}\mathcal{O}_{X})^{\otimes 2}+3p^{2}(F_{*}\mathcal{O}_{X}))\otimes J^{*}L\notag\\
&~~- p^{3}J^{*}L)\\
&\cong\det Rf^{'}_{*}((F_{*}\mathcal{O}_{X})\otimes(p^{2}+p(p-F_{*}\mathcal{O}_{X})+(p-F_{*}\mathcal{O}_{X})^{2})
\otimes J^{*}L)\\
&\cong\det Rf_{*}(F^{*}(p^{2}+p(p-F_{*}\mathcal{O}_{X})+(p-F_{*}\mathcal{O}_{X})^{2})\otimes F^{*}_{X}L)\\
&\cong\det Rf_{*}((p^{2}+p(p-F^{*}F_{*}\mathcal{O}_{X})+(p-F^{*}F_{*}\mathcal{O}_{X})^{2})\otimes L^{\otimes p})\\
&\cong(\det Rf_{*}L^{\otimes p})^{\otimes 3p^{2}}\otimes(\det Rf_{*}(F^{*}F_{*}\mathcal{O}_{X}\otimes L^{\otimes p})))^{\otimes (-3p)}\notag\\
&~~\otimes\det Rf_{*}((F^{*}F_{*}\mathcal{O}_{X})^{\otimes 2}\otimes L^{\otimes p})\\
&\cong(\det Rf_{*}L^{\otimes p})^{\otimes~3p^{2}-3p+1}\otimes_{k=1}^{p-1}(\det Rf_{*}(L^{\otimes p}\otimes \omega^{k}))^{\otimes k+1-3p}\notag\\
&\otimes_{k=0}^{p-2}(\det Rf_{*}(L^{\otimes p}\otimes \omega^{p+k}))^{\otimes p-1-k}.
\end{align}

In fact, these isomorphisms are the consequences of properties of $\det$ and isomorphisms appearing in the $p$-th Adams-Riemann-Roch theorem in the case of characteristic $p>0$.
 Here, we explain them one by one. We will use the following diagram and some notations again.
$$\xymatrix{X\ar@/_/[ddr]_{f}\ar@{.>}[dr]|{F_{X/S}}\ar @/^/[rrd]^{F_{X}}&  & \\
                            & X^{'}\ar[r]_{J}\ar[d]^{f^{'}}&X\ar[d]^{f}\\
                             &S\ar[r]^{F_{S}}&S
                                 }$$
                                                                
We will continue to use $F$ to denote the relative Frobenius instead of $F_{X/S}$ for simplicity.
Firstly, by the definition of the extended determinant functor and the property of the absolute  Frobenius morphism $F^{*}_{S}$ (by Prop. \ref{frebad}),  $(\det (Rf_{*}L))^{\otimes p^{3}}$ is a 
line bundle and we have the isomorphism $(\det Rf_{*}L)^{\otimes p^{4}}\cong F^{*}_{S}(\det (Rf_{*}L))^{\otimes p^{3}}$.

Moreover, (1) follows from the fact that the pull-back commutes with the determinant functor by the property iii) of the extended determinant functor, where $\text{L}F^{*}_{S}$ is the left
derived functor of the functor $F^{*}_{S}$.

We get (2) because cohomology commutes with flat base change, i.e., \\$\text{L}F^{*}_{S}\cdot Rf_{*}\cong Rf^{'}_{*}\cdot\text{L}J^{*}$ (see \cite{Berth}, IV, Prop. 3.1.1).  Because $L$ is a line bundle, 
$\text{L}F^{*}_{S}$ is the same with $F^{*}_{S}$.

In (3), we introduce a new term $(F_{*}\mathcal{O}_{X}-p)^{\otimes 3}\otimes J^{*}L$. In Lemma \ref{l,f}, we know that $F_{*}\mathcal{O}_{X}$ is locally free of rank $p^{r}$ where $r$ is the relative dimension 
of the morphism $f$. Because $f$ is relatively smooth of dimension $1$ in our condition, $F_{*}\mathcal{O}_{X} -p$ is a virtual vector bundle of rank $0$.
According to Corollary \ref{trs}, 
 $\det Rf^{'}_{*}((F_{*}\mathcal{O}_{X}-p)^{\otimes 3}\otimes J^{*}L)$ is trivial.

After that, (4) is a expansion of (3) and (5) is a recombination of (4), by taking the term $F_{*}\mathcal{O}_{X}$ out and leaving $p-F_{*}\mathcal{O}_{X}$ out. 

(6) is direct from the projection formula (see \cite{Berth}, III, Pro. 3.7) and the fact $F^{*}_{X}=F^{*}J^{*}$. We know that $F_{X}^{*}$ has the same property with the $\psi^{p}$ in the case of characteristic $p>0$ by Prop. \ref{frebad}, i.e., $F_{X}^{*}(L)=L^{\otimes p}$.
By the functoriality of the functor $\det$, we  make combinations and go into (7).

In the $p$-th Adams-Riemann-Roch theorem in characteristic $p>0$,  we have the isomorphism $F^{*}F_{*}\mathcal{O}_{X}\cong \theta^{p}(\omega)$ (see Prop. \ref{gci} and some statements before Def. \ref{ad,1}). 
As in the Grothendieck group, there are equalities $F^{*}F_{*}\mathcal{O}_{X}=\tau(\omega)=1+\omega+\omega^{2}+,\cdots,+\omega^{p-1}$ in the virtual category $V(X)$.

Replacing $F^{*}F_{*}\mathcal{O}_{X}$ by the equality above in (7) and sorting out (7), we have
\begin{align}
&(\det Rf_{*}L)^{\otimes p^{4}}\notag\\
&\cong F^{*}_{S}(\det Rf_{*}L)^{\otimes p^{3}}\cong(\det \text{L}F^{*}_{S}(Rf_{*}L))^{\otimes p^{3}}\notag\\
&\cong(\det Rf_{*}L^{\otimes p})^{\otimes 3p^{2}}\otimes(\det Rf_{*}(F^{*}F_{*}\mathcal{O}_{X}\otimes L^{\otimes p}))^{\otimes -3p}\notag\\
&~~\otimes\det Rf_{*}((F^{*}F_{*}\mathcal{O}_{X})^{\otimes 2}\otimes L^{\otimes p})\notag\\
&\cong(\det Rf_{*}L^{\otimes p})^{\otimes~3p^{2}-3p+1}\otimes_{k=1}^{p-1}(\det Rf_{*}(L^{\otimes p}\otimes \omega^{k}))^{\otimes k+1-3p}\notag\\
&\otimes_{k=0}^{p-2}(\det Rf_{*}(L^{\otimes p}\otimes \omega^{p+k}))^{\otimes p-1-k}.\notag
\end{align}
These are isomorphisms (8) and (9), which finishes  the proof of isomorphisms in (I). In particular, for $p=2$ and by a direct computation we have 
\begin{align}
 (\det Rf_{*}L)^{\otimes 16}\cong&(\det Rf_{*}(L^{\otimes 2}))^{\otimes 7}\otimes(\det Rf_{*}(\omega\otimes L^{\otimes 2}))^{\otimes~(-4)}\notag\\
                                     &\otimes\det Rf_{*}(\omega^{2}\otimes(L^{\otimes 2})).\notag
\end{align}

For (II), there are well-known facts about base change, i.e., the smooth morphism is stable under base change (see \cite{Hart}, Chap. III, section 10) and the projective  morphism is
also stable under flat base change (see \cite{Liu}, 6.3.2). It means that $f^{'} :X^{'}\rightarrow S^{'}$ is projective and  smooth of relative dimension
$1$. Furthermore, for any line bundle $L$ on $X$, $Rf_{*}(L)$ is a strictly perfect complex in $\text{Parf}^{0}$ (see (3) of the section 2.1 and Rem. \ref{pcd}). Then we have $$g^{*}(\text{det}_{S}(Rf_{*}(L)))\cong\text{det}_{S^{'}}(\text{L}g^{*}(Rf_{*}(L)))\cong\text{det}_{S^{'}}(Rf^{'}_{*}(g^{'*}L))).$$
The first isomorphism is from iii) of the definition of the extended determinant functor. The second isomorphism is from the base-change formula (see \cite{Berth}, IV, Prop. 3.1.1), i.e., $\text{L}g^{*}Rf_{*}\cong Rf^{'}_{*}\text{L}g^{'*}$.
Because $L$ is a line bundle, $\text{L}g^{'*}L$ is same with $g^{'*}L$, which proves the horizontal isomorphism of the diagram in (II).

For the left vertical isomorphism in the diagram,  it is obvious, i.e., the pull-back for an isomorphism  is the pull-backs for two sides of the isomorphism, respectively. The pull-back of the right hand side of the isomorphism is just $B$.

For the isomorphism $B\cong A$, it results from the further expression of $B$. The pull-back of the right hand side of the isomorphism in (I) is the pull-back for $\det Rf_{*}L^{\otimes p}$, $\det Rf_{*}(L^{\otimes p}\otimes \omega^{k})$ and
$\det Rf_{*}(L^{\otimes p}\otimes \omega^{p+k})$, respectively. As proof in the horizontal isomorphism, for any vector bundle $F$ on $X$ we have
$$g^{*}(\text{det}_{S}(Rf_{*}(F))\cong\text{det}_{S^{'}}(Rf^{'}_{*}(g^{'*}F))).$$
Furthermore, these pull-backs are
$$g^{*}(\det Rf_{*}L^{\otimes p})\cong\text{det}_{S^{'}}(Rf^{'}_{*}(f^{'*}L)^{\otimes p});$$
\begin{align} 
g^{*}(\det Rf_{*}(L^{\otimes p}\otimes \omega^{k})&\cong\text{det}_{S^{'}}(Rf^{'}_{*}(g^{'*}(L^{\otimes p}\otimes\omega^{k}))\notag\\
&\cong\text{det}_{S^{'}}(Rf^{'}_{*}(g^{'*}L^{\otimes p}\otimes\omega^{'k}));\notag
 \end{align}
\begin{align}
g^{*}(\det Rf_{*}(L^{\otimes p}\otimes \omega^{p+k}))&\cong\text{det}_{S^{'}}(Rf^{'}_{*}(g^{'*}(L^{\otimes p}\otimes\omega^{p+k}))\notag\\
&\cong\text{det}_{S^{'}}(Rf^{'}_{*}(g^{'*}L^{\otimes p}\otimes\omega^{'p+k})).\notag
\end{align}
In the last two pull-backs, we use the fact that the differentials is stable under base change, i.e., $\omega^{'}\cong g^{'*}(\omega)$.
Putting these pull-backs together, this is just $A$.

Meanwhile, for the morphism $f^{'} :X^{'}\rightarrow S^{'}$ which satisfies the condition as the morphism $f$, 
we have the isomorphism for $g^{'*}L$, i.e.,

\begin{align} 
 (\det Rf^{'}_{*}&g^{'*}L)^{\otimes p^{4}}\notag\\
 &\cong(\det Rf^{'}_{*}g^{'*}L^{\otimes p})^{\otimes~3p^{2}-3p+1}\otimes_{k=1}^{p-1}(\det Rf^{'}_{*}(g^{'*}L^{\otimes p}\otimes \omega^{'k}))^{\otimes k+1-3p}\notag\\
&\otimes_{k=0}^{p-2}(\det Rf^{'}_{*}(g^{'*}L^{\otimes p}\otimes \omega^{'p+k}))^{\otimes p-1-k}.\notag
\end{align}
The right hand side is also $A$. This verifies the compatibility under base change.
\end{proof}

\begin{rem}
Our theorem is not a consequence of the Adams Riemann Roch theorem in $K$-theory. It results from the virtual category and the Picard category, which allows that our theorem is functorial. 
Compared with the general setting where the property of the Deligne pairing can be applicable and similar results can be obtained, our proof is not so complicated. In \cite{Den}, Eriksson defined the Adams operation and the Bott class on the virtual category and proved that the Adams Riemann Roch theorem was true in the localized Picard category.
Before that, he needs to define what the localized virtual category and the localized Picard category are. These definitions and proofs are impossible to state clearly in several pages.
In our theorem, the Adams operation and the Bott class defined on the virtual category is unnecessary. We emphasize more about the merits of the positive characteristic, which is one of our motivations.
\end{rem}

In \cite{Mum}, Mumford gave an isomorphism which is called Mumford's isomorphism now. We state it as follows:
 Let $f:C\rightarrow S$ be a flat 
local complete intersection generically smooth proper morphism with geometrically connected fibers of dimension $1$, with $S$ any connected normal Noetherian locally factorial scheme.
We denote $\lambda_{n}$ by $\det Rf_{*}\mathcal{\omega}^{\otimes n}$ for $\omega=\omega_{C/S}$ being the relative dualizing sheaf which is canonically isomorphic to the relative differentials of the morphism $f$. 
Then Mumford's isomorphism is $\det Rf_{*}(\omega^{n})\cong(\det Rf_{*}(\omega))^{6n^{2}-6n+1}$, whose original version is stronger than the present expression. 

By our theorem, we can get some results of a version of Mumford's isomorphism.
Under our condition, i.e., $f$ is a smooth and projective morphism of quasi-compact schemes with geometrically connected fibers of dimension $1$, then the isomorphism is also
$\det Rf_{*}(\omega^{n})\cong(\det Rf_{*}
(\omega))^{6n^{2}-6n+1}$, i.e., $\lambda_{n}=\lambda_{1}^{6n^{2}-6n+1}$ by notations.

\begin{coro}\label{mfi} 
 Suppose $p=2$ and let $f$ be as in theorem.  Then we have Deligne's Riemann-Roch theorem in \cite{Del} and Mumford's isomorphism.

\end{coro}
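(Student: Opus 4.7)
The plan is to specialise the $p=2$ case of Theorem \ref{functor}(I) and then combine it with the Deligne pairing identities from Proposition \ref{d,p} and Remark \ref{explai}, together with Grothendieck--Serre duality, to deduce the two classical results as consequences of the $p=2$ formula.

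First, to obtain Mumford's isomorphism I would substitute $L = \mathcal{O}_X$ into the $p = 2$ formula, which collapses after cancellation to
$$(\det Rf_*\mathcal{O})^{\otimes 9} \cong (\det Rf_*\omega)^{\otimes -4} \otimes \det Rf_*\omega^{\otimes 2}.$$
Grothendieck--Serre duality for a smooth proper relative curve provides a canonical quasi-isomorphism $Rf_*\omega \cong (Rf_*\mathcal{O})^{\vee}[-1]$, so the general rules $\det V^{\vee} \cong (\det V)^{\otimes -1}$ and $\det(V[-1]) \cong (\det V)^{\otimes -1}$ in the Picard category $\mathscr{P}is_S$ combine to give $\det Rf_*\omega \cong \det Rf_*\mathcal{O}$. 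Substituting this identification yields $\det Rf_*\omega^{\otimes 2} \cong (\det Rf_*\omega)^{\otimes 13}$, which is the case $n = 2$ of Mumford's formula $\lambda_n \cong \lambda_1^{\otimes (6n^{2} - 6n + 1)}$; unwinding the definition of the Deligne pairing this is equivalent to $\langle\omega, \omega\rangle \cong (\det Rf_*\mathcal{O})^{\otimes 12}$, which is the form of Mumford's isomorphism used in Remark \ref{explai}.

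Second, to recover Deligne's functorial Riemann--Roch theorem I would expand the two mixed factors $\det Rf_*(L^{\otimes 2}\otimes\omega)$ and $\det Rf_*(L^{\otimes 2}\otimes\omega^{\otimes 2})$ appearing in the $p = 2$ formula by the pairing identity
$$\langle M, N\rangle \cong \det Rf_*(M\otimes N) \otimes (\det Rf_*M)^{\otimes -1} \otimes (\det Rf_*N)^{\otimes -1} \otimes \det Rf_*\mathcal{O}$$
applied to the pairs $(L^{\otimes 2}, \omega)$ and $(L^{\otimes 2}, \omega^{\otimes 2})$. The bi-multiplicativity $\langle L^{\otimes 2}, \omega^{\otimes k}\rangle \cong \langle L, \omega\rangle^{\otimes 2k}$, together with $\langle L, L\omega^{-1}\rangle \cong \langle L, L\rangle \otimes \langle L, \omega\rangle^{\otimes -1}$ and $\langle L, L\rangle \cong \det Rf_*L^{\otimes 2}\otimes (\det Rf_*L)^{\otimes -2}\otimes \det Rf_*\mathcal{O}$, allows me to rewrite every term involving $\omega$. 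Feeding in the two identities from the first step, $\det Rf_*\omega \cong \det Rf_*\mathcal{O}$ and $\det Rf_*\omega^{\otimes 2} \cong (\det Rf_*\omega)^{\otimes 13}$, eliminates all remaining factors of $\omega$ and, after regrouping, produces an isomorphism in the expected Deligne shape $(\det Rf_*L)^{\otimes 12} \cong \langle\omega,\omega\rangle \otimes \langle L, \omega^{-1}\otimes L\rangle$.

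The step I expect to be the main obstacle is the bookkeeping of exponents. Our left-hand side is a $16$th power whereas Deligne's is a $12$th power, so the surplus must be absorbed exactly by the factor $\langle\omega,\omega\rangle \cong (\det Rf_*\mathcal{O})^{\otimes 12}$ provided by Mumford; the coefficients $7$, $-4$, $1$ appearing in the $p=2$ formula have to conspire with the powers $2k$ from bi-multiplicativity and with Serre duality so that the final identity holds as an isomorphism of line bundles in $\mathscr{P}is_S$ and not merely as an equality in $K$-theory. Checking that this cancellation is clean, that it inherits functoriality and base-change compatibility from Theorem \ref{functor}(II), and that the phenomenon genuinely relies on the specific arithmetic of $p=2$ (consistently with the author's remark in the introduction that the odd-prime case is only an analogue) is the delicate core of the argument.
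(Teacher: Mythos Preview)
Your first step---setting $L=\mathcal{O}_X$ in the $p=2$ formula and using Serre duality to obtain $\lambda_2\cong\lambda_1^{13}$, hence $\langle\omega,\omega\rangle\cong\lambda_0^{12}$---matches the paper exactly. The divergence is in how you treat the $L$-dependent part of Deligne's formula.

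Your plan is to expand $\det Rf_*(L^{\otimes 2}\otimes\omega)$ and $\det Rf_*(L^{\otimes 2}\otimes\omega^{\otimes 2})$ via the pairing identity and hope the exponents close up to give the $12$th power. If you carry this out you will find that after all substitutions (bimultiplicativity, $\lambda_0\cong\lambda_1$, $\lambda_2\cong\lambda_1^{13}$) the $p=2$ formula collapses to
\[
(\det Rf_*L)^{\otimes 8}\cong(\det Rf_*\mathcal{O})^{\otimes 8}\otimes\langle L,\,L\otimes\omega^{-1}\rangle^{\otimes 4},
\]
i.e.\ the \emph{fourth} power of the basic relation $(\det Rf_*(L-\mathcal{O}))^{\otimes 2}\cong\langle L,L\omega^{-1}\rangle$. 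Deligne's statement is the \emph{sixth} power of that same relation, and you cannot pass from the fourth power to the sixth power of an isomorphism of line bundles. So the worry you flag about exponent bookkeeping is a genuine obstruction, not just a nuisance: the $p=2$ formula alone, used in the way you describe, does not recover Deligne's theorem in the exact form $(\det Rf_*L)^{\otimes 12}\cong\langle\omega,\omega\rangle\otimes\langle L,L\omega^{-1}\rangle^{\otimes 6}$.

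The paper sidesteps this by applying Grothendieck--Serre duality not only at $L=\mathcal{O}$ but at a general $L$: from $\det Rf_*L\cong\det Rf_*(\omega\otimes L^{-1})$ one gets directly
\[
(\det Rf_*(L-\mathcal{O}))^{\otimes 2}\;\cong\;\langle L,\,\omega^{-1}\otimes L\rangle
\]
(equation~(10) in the paper) from Proposition~\ref{d,p}, with no appeal to Theorem~\ref{functor} for this step. One then raises~(10) to the sixth power and combines it with the Mumford identity $\langle\omega,\omega\rangle\cong\lambda_0^{12}$ obtained from the $p=2$ formula. Thus the main theorem is used only for the Mumford piece; the $L$-dependent part of Deligne's isomorphism comes from Serre duality for general~$L$.

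A second, smaller gap: you stop at $\lambda_2\cong\lambda_1^{13}$ and call this Mumford's isomorphism. The paper goes on to prove the full $\lambda_n\cong\lambda_1^{6n^2-6n+1}$ by induction, feeding $L=\omega^{\otimes n}$ into Theorem~\ref{functor} to obtain $\lambda_{2n+2}\cong\lambda_n^{16}\otimes\lambda_{2n}^{-7}\otimes\lambda_{2n+1}^{4}$ and combining this with the recursion $\lambda_n^{3}\cong\lambda_{2n-1}\otimes\lambda_{n-1}^{-1}\otimes\lambda_0^{3}$ that comes from~(10) with $L=\omega^{\otimes n}$.
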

\begin{proof}
On the one hand, we have the isomorphism (I) in the previous theorem for any characteristic $p>0$. we will view the isomorphism as an isomorphism of graded
line bundles, even though we can't write up the corresponding degree of the graded line bundles. 

On the other hand, we have the Serre duality. For any vector bundle $F$ on $X$,  by the Grothendieck-Serre duality, $R\underline{Hom}(Rf_{*}F, \mathcal{O}) [-1]\cong Rf_{*}\underline{Hom}(F, \omega)$, where $\underline{Hom}$ is the hom functor for complexes of sheaves, one gets the isomorphism of graded line bundles 
between $\det Rf_{*}F$ and $\det Rf_{*}\underline{Hom}(F, \omega)$ (see \cite{Del}, Pag. 150). When we denote $\det Rf_{*}\mathcal{\omega}^{\otimes n}$ by $\lambda_{n}$, we also denote $\det Rf_{*}\mathcal{O}$ by  $\lambda_{0}$.
Firstly, let $F$ be the trivial bundle in the isomorphism of line bundles above. This is $\det Rf_{*}\mathcal{O} \cong \det Rf_{*}\mathcal{\omega}$, i.e., $\lambda_{0}\cong\lambda_{1}$.
Furthermore, let $F$ be a line bundle $L$ and  we have 
\begin{align}
\det Rf_{*}&((L-\mathcal{O})\otimes  (\omega\otimes L^{-1}- \mathcal{O}))\notag\\
&\cong \det Rf_{*}\omega \otimes  (\det Rf_{*}L)^{-1} \otimes (\det Rf_{*}(\omega \otimes L^{-1}))^{-1}\otimes 
 \det Rf_{*}\mathcal{O}\notag\\
 &\cong(\det Rf_{*}(L-\mathcal{O}))^{\otimes (-2)}.\notag
 \end{align} 
 
By the property of the Deligne pairing (Prop. \ref{d,p}),  then we have
\begin{align}
(\det Rf_{*}(L-\mathcal{O}))^{\otimes 2}&\cong\det Rf_{*}(-(L-\mathcal{O})\otimes  (\omega\otimes L^{-1}- \mathcal{O}))\notag\\
&\cong\det Rf_{*}((L-\mathcal{O})\otimes  ( \mathcal{O}-\omega\otimes L^{-1}))\notag\\
&\cong \langle L, \omega ^{-1}\otimes L\rangle. 
 \end{align}
By tensoring power $6$ of two sides of (10), we have $(\det Rf_{*}(L-\mathcal{O}))^{\otimes 12}\cong \langle L, \omega ^{-1}\otimes L\rangle^{\otimes 6}$.
Meanwhile, we consider the Deligne pairing  $\langle \omega, \omega \rangle$. According to the property of the Deligne pairing (Prop. \ref{d,p}), this is the following isomorphism
\begin{align}
\langle \omega, \omega\rangle& \cong \langle\det ( \omega-\mathcal{O}), \det(\omega-\mathcal{O})\rangle \notag\\
&\cong \det Rf_{*} (( \omega-\mathcal{O})\otimes (\omega-\mathcal{O}))\notag\\
&\cong \det Rf_{*}(\omega^{2})\otimes ( \det Rf_{*} \omega)^{\otimes (-2)}\otimes\det  Rf_{*} \mathcal{O}.\notag
 \end{align}
Taking $L$ for trivial bundle and $p=2$ in our theorem, we have
\begin{align}
(\det Rf_{*}\mathcal{O})^{\otimes 2^{4}}&\cong(\det Rf_{*}\mathcal{O})^{\otimes~3\cdot 2^{2}-3\cdot 2+1}\otimes(\det Rf_{*}\omega)^{\otimes~2-3\cdot 2}\notag\\
                                        &~~\otimes\det Rf_{*}(\omega^{2})\notag
\end{align}
i.e., $\lambda_{0}^{16}\cong \lambda_{0}^{7}\otimes \lambda_{1}^{-4}\otimes\lambda_{2}$. By $\lambda_{0}\cong\lambda_{1}$, that is $\lambda_{2}\cong \lambda_{1}^{13}$ and therefore,
hence $\langle \omega, \omega\rangle\cong\lambda_{2}\otimes \lambda_{1}^{(-2)}\otimes\lambda_{0}\cong\lambda_{0}^{12}$.

By the isomorphism $(\det Rf_{*}(L-\mathcal{O}))^{\otimes 12}\cong \langle L, \omega ^{-1}\otimes L\rangle^{\otimes 6}$, then we have the standard statement $$(\det Rf_{*}L)^{\otimes 12}\cong (\det Rf_{*}\mathcal{O})^{\otimes12}\otimes\langle L, \omega ^{-1}\otimes L\rangle^{\otimes 6}\cong\langle \omega, \omega\rangle\otimes\langle L, \omega ^{-1}\otimes L\rangle^{\otimes 6}.$$
which completely coincides with Deligne's statement in \cite{Del}.

If we use the property of the Deligne pairing  (Prop. \ref{d,p}) again as in Rem. \ref{explai}, the right hand side of the isomorphism in (10) is  
\begin{align} \langle L&,  ~\omega^{-1} \otimes L \rangle \notag\\
&\cong\det Rf_{*}(L^{2}\otimes \omega^{-1} ) \otimes (\det Rf_{*}L)^{-1}\otimes (\det Rf_{*}L\otimes\omega^{-1})^{-1}\otimes \det Rf_{*}(\mathcal{O}).\notag
\end{align} 
Then the isomorphism (10) becomes
 \begin{align}
(\det& Rf_{*}L)^{\otimes 2}\otimes (\det Rf_{*}\mathcal{O})^{\otimes (-2)}\\
&\cong\det Rf_{*}(L^{2}\otimes \omega^{-1} ) \otimes (\det Rf_{*}L)^{-1}\otimes (\det Rf_{*}L\otimes\omega^{-1})^{-1}\otimes \det Rf_{*}(\mathcal{O}).\notag
\end{align} 
Let $L$ be $n$-th power $\omega^{n}$ of the sheaf of differentials $\omega$ in (11). The isomorphism (11) is $\lambda_{n}^{2}\otimes \lambda_{0}^{\otimes(-2)}\cong\lambda_{2n-1}\otimes\lambda_{n}^{(-1)}\otimes \lambda_{n-1}^{\otimes (-1)}\otimes\lambda_{0}.$
In our proof of Deligne's Riemann Roch theorem, we already have the isomorphism $\lambda_{2}\cong \lambda_{1}^{13}$ which is just Mumford's isomorphism for $n=2$.
For general $n$ in Mumford's isomorphism , let $L$ be the bundle $\omega^{n}$ in our theorem. This is  
\begin{align}
(\det Rf_{*}\omega ^{n})^{\otimes 16}&\cong(\det Rf_{*}\omega ^{2n})^{\otimes~7}\otimes(\det Rf_{*}(\omega\otimes(\omega ^{2n})))^{\otimes(-4)}\notag\\
                                        &~~\otimes\det Rf_{*}(\omega^{2}\otimes(\omega ^{2n}))\notag
\end{align}
i.e., $\lambda_{2n+2}\cong\lambda_{n}^{16}\otimes\lambda_{2n}^{(-7)}\otimes\lambda_{2n+1}^{4}$. Plus the isomorphism $\lambda_{n}^{2}\otimes \lambda_{0}^{\otimes(-2)}\cong\lambda_{2n-1}\otimes\lambda_{n}^{(-1)}\otimes \lambda_{n-1}^{\otimes (-1)}\otimes\lambda_{0}$, 
by induction, that is just Mumford's isomorphism 
$\lambda_{n}\cong\lambda_{1}^{\otimes (6n^{2}-6n+1)}$ for $p=2$. 
\end{proof}

\begin{rem} 
The original proof of Mumford's isomorphism (see \cite{Mum}, Pages 99-110) is a  calculation by using Grothendieck-Riemann-Roch
and the facts which is referred to the Picard-group of the moduli-functor of stable curves and so on. In our corollary, it is a special case of our theorem
by taking the line bundle to be the trivial bundle and the Serre duality. For any prime number $p>2$, we have an analogous expression to Mumford's isomorphism. This is explained as follows: 
Let $L$ be the trivial bundle in our theorem. This is the isomorphism
 \begin{align}
 \lambda_{0}^{p^{4}} \cong &  \lambda_{0}^{3p^{2}-3p+1} \otimes\lambda_{1}^{2-3p} \otimes\lambda_{2}^{3-3p}\otimes \ldots\otimes \lambda_{p-1}^{p-3p}\otimes\notag\\
      & \lambda_{p}^{p-1}\otimes  \lambda_{p+1}^{p-2}\otimes\ldots \otimes\lambda_{2p-2}.\notag                         
      \end{align}
Given the isomorphism $\lambda_{0}\cong\lambda_{1}$,  then we have 
\begin{align}
 \lambda_{2p-2}\cong &\lambda_{1}^{p^{4}-3p^{2}+6p-3}\otimes\lambda_{2}^{3p-3}\otimes \ldots\otimes \lambda_{p-1}^{3p-p}\otimes\notag\\   
   & \lambda_{p}^{1-p}\otimes  \lambda_{p+1}^{2-p}\otimes\ldots \otimes\lambda_{2p-3}^{\otimes(-2)}.\notag      
\end{align}
More generally, let $L$ be $\omega^{n}$ in our theorem. Then we have 
 \begin{align}
 \lambda_{n}^{p^{4}} \cong &  \lambda_{np}^{3p^{2}-3p+1} \otimes\lambda_{np+1}^{2-3p} \otimes\lambda_{np+2}^{3-3p}\otimes \ldots\otimes \lambda_{np+p-1}^{p-3p}\otimes\notag\\
      & \lambda_{np+p}^{p-1}\otimes  \lambda_{np+p+1}^{p-2}\otimes\ldots \otimes\lambda_{np+2p-2}.\notag                         
      \end{align}
\end{rem}

\section*{Acknowledgements}
This is a note from my thesis in order to obtain the doctoral degree. I want to express my sincere thanks for my advisor Professor Damian R\"{o}ssler. The note results from his advice and encouragement, including
some mistakes corrected after some discussions. Also, my appreciates is given to my referees, Professor Bernhard K\"{o}ck and Vincent Maillot, who
read my thesis carefully and provide some helpful opinions. Finally, I will thank every author in my references whose academic resource is greatly help
for the note.

\newpage

\end{document}